\numberwithin{equation}{section}
\theoremstyle{plain}
\newtheorem{thm}{Theorem}[section]
\newtheorem{theorem}[thm]{Theorem}
\newtheorem{lemma}[thm]{Lemma}
\newtheorem{problem}[thm]{Problem}
\newtheorem*{question*}{Question}
\theoremstyle{definition}
\newtheorem{definition}[thm]{Definition}
\renewcommand{\geq}{\geqslant}
\renewcommand{\leq}{\leqslant}
\renewcommand{\P}{\text{P}}
\newcommand{\R}{\mathbb{R}}
\newcommand{\C}{\mathbb{C}}
\newcommand{\N}{\mathbb{N}}
\renewcommand{\P}{\mathbb{P}}
\newcommand{\E}{\mathbb{E}}
\newcommand{\cF}{\mathcal{F}}
\newcommand{\eps}{\varepsilon}
\newcommand{\Hd}{\dim_\mathrm{H}}
\newcommand{\Fd}{\dim_\mathrm{F}}
\newcommand{\Z}{\mathbb{Z}}
\renewcommand{\epsilon}{\varepsilon}
\renewcommand{\rho}{\varrho}
\renewcommand{\phi}{\varphi}
\renewcommand{\t}{\mathrm{\mathbf{t}}}
\newcommand{\s}{\mathrm{\mathbf{s}}}
\renewcommand{\k}{\mathrm{\mathbf{k}}}
\newcommand{\n}{\mathrm{\mathbf{n}}}
\title[On the Fourier analytic structure of the Brownian graph]{On the Fourier analytic structure \\ of the Brownian graph}
\author{Jonathan M. Fraser}
\address{Jonathan M. Fraser\\School of Mathematics \& Statistics\\University of St Andrews\\ St Andrews\\ KY16 9SS\\ UK  }
\email{jmf32@st-andrews.ac.uk}
\author{Tuomas Sahlsten}
\address{Tuomas Sahlsten\\School of Mathematics\\ University of Manchester\\Oxford Road \\Manchester\\M13 9PL, UK}
\email{tuomas.sahlsten@manchester.ac.uk}
\keywords{Brownian motion, Wiener process, It\=o calculus, It\=o drift-diffusion process, Fourier transform, Fourier dimension, Salem set, graph.\\
 \emph{Mathematics Subject Classification} 2010: 42B10, 60H30 (primary); 11K16, 60J65, 28A80 (secondary).}
\thanks{JMF acknowledges financial support from the Leverhulme Trust (RF-2016-500) and TS acknowledges the support from the European Union (ERC grant $\sharp$306494 and Marie Sk{\l}odowska-Curie Individual Fellowship grant $\sharp$655310)}
\begin{document}

\begin{abstract} In a previous article (\textit{Int. Math. Res. Not.} 2014, 2730--2745) T. Orponen and the authors proved that the Fourier dimension of the graph of any real-valued function on $\R$ is bounded above by $1$. This partially answered a question of Kahane ('93) by showing that the graph of the Wiener process $W_t$ (Brownian motion) is almost surely not a Salem set.  In this article we complement this result by showing that the Fourier dimension of the graph of $W_t$ is almost surely $1$. In the proof we introduce a method based on It\=o calculus to estimate Fourier transforms by reformulating the question in the language of It\=o drift-diffusion processes and combine it with the classical work of Kahane on Brownian images.
\end{abstract}

\maketitle

\section{Introduction and results}

\subsection{Geometric properties of Brownian motion}

Gaussian processes are standard models in modern probability theory and perhaps the most well-studied example is the \textit{Wiener process} (or standard \textit{Brownian motion}) $W = W_t : \mathbb{R}^{\geq 0} \to \mathbb{R}$ characterised by the properties: $W_0 = 0$, the map $t \mapsto W_t$ is almost surely continuous, and $W_t$ has independent increments such that $W_t - W_s$ for $t > s$ is normally distributed: 
$$W_t - W_s \sim N(0,t-s).$$ 
The Wiener process has far reaching importance throughout mathematics and it is a topic of particular interest to understand its geometric structure. This can be achieved by studying several random fractals associated to the process such as \textit{images} $W(K) := \{W_t : t \in K\}$ of compact sets $K \subset [0,\infty)$, \textit{level sets} $L_c(W) := \{t \in \R : W_t = c\}$ for $c \in \R$, \textit{graphs} $G(W) := \{(t,W_t) : t \in \R\}$ and other more delicate constructions such as $\mathrm{SLE}_\kappa$-curves. 

The basic properties of Brownian motion mean that these random fractals enjoy a certain `statistical self-similarity' which facilitates computation of their Hausdorff dimensions $\Hd$. Classical results include McKean's proof \cite{mckean} that $\Hd W(K) = \min\{1,2\Hd K\}$ almost surely for each compact $K \subset [0,\infty)$. Moreover, for the level sets $\Hd L_c(W) = 1/2$ almost surely for $c = 0$ by Taylor \cite{taylor2} and for all $c \in \R$ by Perkins \cite{perkins} conditioned on $L_c(W)$ being non-empty. For the Brownian graph $G(W)$, Taylor \cite{taylor} proved that $\Hd G(W) = 3/2$ almost surely and Beffara computed the Hausdorff dimensions of $\mathrm{SLE}_\kappa$-curves \cite{beffara}.  Moreover, Hausdorff dimensions for similar sets given by many other Gaussian processes, such as fractional Brownian motion, have been also considered, see for example Adler's classical results \cite{adler} for fractional Brownian graphs and the recent work concerning variable drift by Peres and Sousi \cite{peressousi}.

\subsection{Fourier analytic properties of Brownian motion} The Hausdorff dimension is the most commonly used tool for measuring the size of a set $A$ but there is also another fundamental notion based on Fourier analysis which reveals more arithmetic and geometric features of $A$, including curvature, which are not seen by the Hausdorff dimension. This is based on studying the \textit{Fourier coefficients} of a probability measure $\mu$ on  $A \subset \R^d$, which are defined by
$$\widehat{\mu}(\xi) := \int e^{-2\pi i\xi \cdot x}\, d\mu(x), \quad \xi \in \R^d.$$
Now the size of $A$ can be linked to the existence of probability measures $\mu$ on $A$ with decay of Fourier coefficients $\widehat{\mu}(\xi)$ when $|\xi| \to \infty$. The following connection between Hausdorff dimension and decay of Fourier coefficients is well-known and goes back to Salem and Kaufman, but we refer the reader to \cite{mattilaFourier} for the details.  If $\Hd A > s$, then $A$ supports a probability measure $\mu$ with $|\widehat{\mu}(\xi)| = O(|\xi|^{-s/2})$ ``on average'', that is, $\int_{\R^d} |\widehat{\mu}(\xi)|^2 |\xi|^{s-d} \, d\xi < \infty$ and vice versa the Hausdorff dimension can be bounded from below if such a measure $\mu$ can be found. It is possible, however, that $\Hd A = s > 0$ but no measure $\mu$ on $A$ has Fourier decay at infinity, this happens for example when $A$ is the middle-third Cantor set in $\R$. Therefore, one defines the notion of \textit{Fourier dimension} $\Fd A$ of a set $A \subset \mathbb{R}^d$ as the supremum of $s \in [0,d]$ for which there exists a probability measure $\mu$ supported on $A$ such that
\begin{align}\label{eq:poly}
|\widehat\mu(\xi)| = O(|\xi|^{-s/2}), \quad \text{as } |\xi| \to \infty.
\end{align}
Then by this definition we always have $\Fd A \leq \Hd A$ and if the two dimensions coincide then $A$ is called a \textit{Salem set} or a \textit{round set} after Kahane \cite{kahane}. In general Fourier dimension and Hausdorff dimension have no relationship other than this; in fact, K\"orner \cite{korner} established that for any $0 \leq s < t \leq 1$ it is possible to construct examples $A \subset \R$ with $\Fd A = s$ and $\Hd A = t$. Further properties of Fourier dimension were recently developed by Ekstr\"om, Persson and Schmeling \cite{EPS}. For a more in depth account of Fourier dimension, the reader is referred to \cite{mattila, mattilaFourier}.

Finding measures $\mu$ on $A$ with polynomially decaying Fourier transform (i.e. \eqref{eq:poly} for some $s > 0$) has deep links to absolute continuity, arithmetic and geometric structure, and curvature. If $A$ supports a measure $\mu$ such that \eqref{eq:poly} holds with $s > 1$, then Parseval's identity yields that $\mu$ is absolutely continuous to Lebesgue measure and $A$ must contain an interval. An application of Weyl's criterion known as the Davenport-Erd\"os-LeVeque criterion (see \cite{DEL}) yields that in $\R$  polynomial decay of $\widehat{\mu}$ guarantees that $\mu$ almost every number is normal in every base and a interesting result of {\L}aba and Pramanik \cite{laba} shows that if the $s$ in \eqref{eq:poly} is sufficiently close to $1$ for a Frostman measure $\mu$ on $A \subset \R$ and there is a suitable control over the constants (see the recent work of Shmerkin \cite{Shm}), then $A$ contains non-trivial $3$-term arithmetic progressions. Moreover, an analogous result also holds for higher dimensions with arithmetic patches \cite{chan}. 

On the curvature side, if $A$ is a line-segment in $\R^2$, then $A$ cannot contain any measure with Fourier decay at infinity so $A$ cannot be a Salem set. However, if $A$ is an arc of a circle or more generally a $1$-dimensional smooth manifold with non-vanishing curvature then the $1$-dimensional Hausdorff measure $\mu$ on $A$ satisfies \eqref{eq:poly} with $s = 1$, see \cite{mattilaFourier}. In particular, $A$ is a Salem set. In these examples of $A$ one can observe that the important arithmetic or curvature features present are not seen from the Hausdorff dimension.

Constructing explicit Salem sets (which are not manifolds), or just sets $A$ supporting a measure $\mu$ satisfying \eqref{eq:poly} for some $s > 0$, can be achieved through, for example, Diophantine approximation by Kaufman's works \cite{kaufman1, kaufman2}, Bluhm \cite{bluhm}, Queff\'{e}lec-Ramar\'{e} \cite{QR} or via thermodynamical tools by Jordan and Sahlsten \cite{JordanSahlsten2015}. However, for random sets it has been observed in many instances that $A$ is either almost surely Salem or at least supports a measure $\mu$ with \eqref{eq:poly} for some $s > 0$. This was first done for  random Cantor sets by Salem \cite{salem}, where Salem sets were also introduced. Later Kahane published his classical papers \cite{kahane1966a,kahane1966b}, where he found out that the Wiener process and other Gaussian processes provide natural examples. 

Since Kahane and Salem, the study of Fourier analytic properties of natural sets derived from Gaussian processes and more general random fields has been an active topic. For the Brownian images Kahane \cite{Ka1} proved that for any compact $K \subset \R$ the image $W(K)$ is almost surely a Salem set of Hausdorff dimension $\min\{1,2\dim K\}$. Kahane also established a similar result for fractional Brownian motion. {\L}aba and Pramanik \cite{laba} then applied these to the additive structure of Brownian images. Later Shieh and Xiao \cite{xiao} extended Kahane's work to very general classes of Gaussian random fields. However, understanding the Fourier analytic properties of the level sets and graphs remained an important problem for some time.  In 1993, Kahane \cite{kahane} outlined the problem explicitly.
\begin{problem}[Kahane]
Are the graph and level sets of a stochastic process such as fractional Brownian motion Salem sets?
\end{problem}

This precise formulation of the problem was given by Shieh and Xiao \cite[Question 2.15]{xiao}, but they attribute the problem to Kahane. For the Wiener process Kahane \cite{kahane1983} had already established that the level sets $L_c(W)$ are Salem almost surely for any fixed $c \in \R$ conditioned on $L_c(W)$ being non-empty. The fractional Brownian motion case has recently been considered for $c = 0$ by Fouch\'e and Mukeru \cite{FM}. 

\begin{figure}[ht!]\label{fig:browngraph}
\includegraphics[scale=1.2]{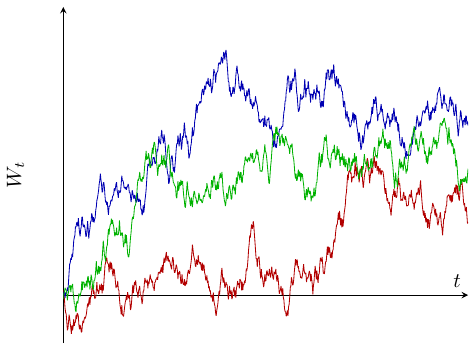}
\caption{Three realisations of the graph $G(W)$ for the Brownian motion $W_t$.}
\end{figure}

Kahane's problem for graphs, even in the case of the standard Brownian motion $W_t$, however, remained open for quite a while until, together with T. Orponen, we established that the Brownian graph $G(W)$ is almost surely \emph{not} a Salem set \cite{prequel}. It turned out that the reason for this is purely geometric: the proof was based on the following application of a Fourier-analytic version of Marstrand's slicing lemma.

\begin{theorem}[Theorem 1.2 in \cite{prequel}]\label{thm:prequel}
For any function $f : [0,1] \to \R$ the Fourier dimension of the graph $G(f)$ cannot exceed $1$.
\end{theorem}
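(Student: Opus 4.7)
The plan is to argue by contradiction: suppose $G(f)$ supports a probability measure $\mu$ with $|\widehat{\mu}(\xi)| = O((1+|\xi|)^{-s/2})$ for some $s > 1$. The guiding intuition (in the spirit of a Marstrand-type slicing principle) is that $G(f)$ meets every vertical line in at most one point, so the ``vertical slices'' of $\mu$ are Dirac masses, which are incompatible with Fourier decay.

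First I would project $\mu$ onto the $t$-axis. The pushforward $\sigma := (\pi_1)_*\mu$ is a probability measure on $[0,1]$ whose Fourier transform $\widehat{\sigma}(\xi_1) = \widehat{\mu}(\xi_1, 0)$ inherits decay of order $s/2 > 1/2$, hence lies in $L^2(\R)$. By Plancherel, $\sigma$ is then absolutely continuous with some density $h \in L^2([0,1])$. Since $\mu$ is concentrated on the graph, $\mu = (\mathrm{id}, f)_*\sigma$, and therefore, for each fixed $\xi_2$,
$$\widehat{\mu}(\xi_1, \xi_2) \;=\; \widehat{g_{\xi_2}}(\xi_1), \qquad g_{\xi_2}(t) := e^{-2\pi i f(t) \xi_2}\, h(t)\, \mathbf{1}_{[0,1]}(t).$$

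This identity is the decisive point at which the graph hypothesis enters: the factor $e^{-2\pi i f(t) \xi_2}$ is unimodular, so $\|g_{\xi_2}\|_2 = \|h\|_2$ for every $\xi_2$, and Plancherel in the $\xi_1$ variable gives
$$\int_{\R} |\widehat{\mu}(\xi_1, \xi_2)|^2 \, d\xi_1 \;=\; \|h\|_2^2,$$
a positive constant independent of $\xi_2$. On the other hand, integrating the pointwise bound $|\widehat{\mu}(\xi)|^2 = O((1+|\xi|)^{-s})$ over $\xi_1 \in \R$ with $s > 1$ produces a quantity of order $|\xi_2|^{1-s}$, which tends to $0$ as $|\xi_2| \to \infty$. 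Together these force $h \equiv 0$, contradicting that $\sigma$ is a probability measure.

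There is no serious technical obstacle; the entire proof reduces to a single Plancherel computation. The one conceptual step that deserves attention is the realisation that being supported on a graph translates, after projection, into a purely unimodular modulation of the density on $[0,1]$, which preserves $L^2$ norms and therefore rules out any uniform Fourier decay in the ``horizontal'' variable $\xi_1$ once the ``vertical'' variable $\xi_2$ is fixed.
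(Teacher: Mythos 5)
Your argument is correct, and it takes a genuinely different route from the one used to prove this statement in \cite{prequel}. There the bound is deduced from a Fourier-analytic version of Marstrand's slicing lemma: a measure with $|\widehat{\mu}(\xi)|=O(|\xi|^{-s/2})$ for some $s>1$ has finite $t$-energy for any $1<t<s$, and the slicing machinery then produces, for a positive-measure set of vertical lines, non-trivial slice measures of finite $(t-1)$-energy (in particular non-atomic), which is incompatible with the fact that a graph meets every vertical line in at most one point. You replace that machinery by a single Plancherel computation in the horizontal frequency variable: the decay at $\xi_2=0$ gives the projected measure an $L^2$ density $h$, the graph structure turns $\widehat{\mu}(\cdot,\xi_2)$ into the Fourier transform of the unimodular modulation $e^{-2\pi i\xi_2 f(t)}h(t)\mathbf{1}_{[0,1]}(t)$, so $\int_{\R}|\widehat{\mu}(\xi_1,\xi_2)|^2\,d\xi_1=\|h\|_2^2\geq 1$ for every $\xi_2$ (note $\|h\|_2\geq\|h\|_1=1$ by Cauchy--Schwarz on $[0,1]$), while integrating the assumed pointwise bound with $s>1$ makes the same quantity $O(|\xi_2|^{1-s})\to 0$. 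The computation is sound (Plancherel applies since $g_{\xi_2}\in L^1\cap L^2$), and it is more elementary and self-contained than the slicing route; it is in effect a frequency-side, quantitative incarnation of the same geometric fact. What the slicing lemma buys is flexibility (it handles sets whose vertical slices are merely small rather than singletons), but for graphs your argument suffices and even gives the slightly sharper statement that no measure carried by a graph can have decay exponent $s/2$ with $s>1$.

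One step you should make explicit is the identity $\mu=(\mathrm{id},f)_*\sigma$. As written it presupposes that $f$ is (at least $\sigma$-)measurable, so that the pushforward and the modulation $t\mapsto e^{-2\pi i\xi_2 f(t)}$ are defined $\sigma$-almost everywhere; the theorem, however, is stated for completely arbitrary $f$, and the interpretation of ``supported on'' matters: there exist non-measurable functions whose graph has full outer Lebesgue measure in $[0,1]^2$, and under an outer-measure reading the statement would actually fail, since Lebesgue measure on the square has $|\widehat{\mu}(\xi)|=O(|\xi|^{-1})$, i.e.\ decay exponent $s=2$. Under the usual convention for Fourier dimension (as in \cite{mattilaFourier}), where $\mu$ ranges over probability measures whose compact support satisfies $\spt\mu\subset G(f)$, the issue disappears: $\spt\mu$ is then a compact set meeting each vertical line at most once, hence it is the graph of a continuous function on the compact projection $K$ of $\spt\mu$ to the $t$-axis, and this function coincides with $f|_K$. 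Running your argument with this continuous restriction (so $\sigma$ lives on $K$ and the modulation is Borel) legitimises the pushforward step, and the rest of the proof goes through unchanged.
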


Indeed, since the Hausdorff dimension $\Hd G(W) = 3/2 > 1$ almost surely (see \cite{taylor}), this answers Kahane's problem in the negative for the Wiener process. Note that this also gives a negative answer for fractional Brownian motion since the Hausdorff dimension in that case is also strictly larger than 1 almost surely.

The methods in \cite{prequel} are purely geometric and involve no stochastic properties of Brownian motion.  They also do not shed any light on the precise value for the Fourier dimension of $G(W)$. Note that even though  $\Hd G(f) \geq 1$ for any continuous $f : [0,1]\to\R$, the Fourier dimension of a graph may take any value in the interval $[0,1]$, see \cite{prequel}. For example, $\Fd G(f) = 0$ if $f$ is affine and, moreover, $\Fd G(f) = 0$ for the \textit{Baire generic} $f \in C[0,1]$, see  \cite[Theorem 1.3]{prequel}.

The main result of this paper is to complete the work initiated by Kahane's problem in the case of Brownian motion by establishing the precise almost sure value of the Fourier dimension of $G(W)$. 

\begin{theorem}
\label{thm:main}
The graph $G(W)$ has Fourier dimension $1$ almost surely.
\end{theorem}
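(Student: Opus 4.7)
The upper bound $\Fd G(W) \le 1$ follows immediately from Theorem~\ref{thm:prequel}, so it suffices to prove the matching lower bound almost surely. The plan is to exhibit a probability measure $\mu$ supported on $G(W|_{[0,1]}) \subset G(W)$ with $|\widehat\mu(\omega)| = O(|\omega|^{-s/2})$ for every $s < 1$. The natural candidate is the occupation lift
$$\mu := (\id, W)_{*}\lambda|_{[0,1]}, \qquad \widehat\mu(\xi,\eta) = \int_0^1 e^{-2\pi i(\xi t + \eta W_t)}\, dt.$$

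The first step is to apply It\^o's formula to $\varphi_t := e^{-2\pi i(\xi t + \eta W_t)}$. Since
$$d\varphi_t = -\bigl(2\pi i\xi + 2\pi^2 \eta^2\bigr)\varphi_t\, dt - 2\pi i \eta\, \varphi_t\, dW_t,$$
integrating over $[0,1]$ and rearranging yields the identity
$$\widehat\mu(\xi,\eta) = \frac{\varphi_1 - 1 + 2\pi i \eta\, S(\xi,\eta)}{-2\pi i\xi - 2\pi^2 \eta^2}, \qquad S(\xi,\eta) := \int_0^1 \varphi_t\, dW_t.$$
Since $|\varphi_1 - 1| \le 2$ and $|-2\pi i\xi - 2\pi^2 \eta^2|$ is comparable to $|\xi| + \eta^2$, a short case split according to whether $|\eta|$ exceeds $|\omega|^{1/2}$ gives the a priori bound
$$|\widehat\mu(\omega)| \lesssim \frac{1}{|\omega|} + \frac{|S(\omega)|}{|\omega|^{1/2}}, \qquad |\omega| \ge 2.$$

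The heart of the argument, and its main obstacle, is therefore to upgrade the pointwise control of the stochastic integral $S$ to a uniform-in-frequency estimate; I would aim for the almost sure bound
$$\sup_{2^n \le |\omega| \le 2^{n+1}} |S(\omega)| \le C n^c$$
for some absolute $c > 0$ and all $n$ large enough. The It\^o isometry combined with the Burkholder--Davis--Gundy inequality gives $\E |S(\omega)|^{2p} \le C_p$ for every $p$; the same tools applied to $S(\omega) - S(\omega')$ produce a H\"older-type estimate in $\omega$, with the $\eta$-Lipschitz factor proportional to $\sup_{t \in [0,1]}|W_t|$, a random variable with Gaussian tails. A Kolmogorov-type chaining argument on a polynomially fine grid covering each dyadic annulus, combined with Borel--Cantelli across scales $n$, then promotes the pointwise moment bound to the required uniform logarithmic bound.

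Plugging this into the deterministic inequality yields $|\widehat\mu(\omega)| = O((\log|\omega|)^c / |\omega|^{1/2})$ almost surely, hence $|\widehat\mu(\omega)| = O(|\omega|^{-s/2})$ for every $s < 1$, whence $\Fd G(W) \ge 1$. The delicate part is the uniform control of $S(\omega)$: It\^o isometry alone only furnishes pointwise-in-$\omega$ bounds, and the chaining step must use sufficiently high BDG moments so that the Borel--Cantelli series still converges against the exponentially growing number of grid points in each annulus.
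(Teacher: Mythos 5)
Your argument is correct in outline and reaches the same decay rate, but by a genuinely different route from the paper. The paper splits frequencies $u(\cos\theta,\sin\theta)$ into horizontal angles ($|\sin\theta|\le u^{-1/2}$), where It\=o's lemma is applied up to a random terminal time $T$ chosen so that the boundary terms vanish exactly and the prefactor $|\sigma/(bi-\sigma^2/2)|\le|\tan\theta|\lesssim u^{-1/2}$ does the work, and vertical angles, where the drift is discarded and Kahane's Gaussian moment bound for Brownian image measures is invoked; the resulting estimates on lattices $\eps\Z^2$ are then upgraded to all of $\R^2$ via Kahane's comparison lemma. You instead keep the full complex denominator $-2\pi i\xi-2\pi^2\eta^2$, whose modulus is comparable to $|\xi|+\eta^2\gtrsim|\omega|$ in \emph{every} direction, so the crude bound $|\varphi_1-1|\le 2$ already makes the boundary term $O(|\omega|^{-1})$ and the stochastic-integral prefactor $2\pi|\eta|/|2\pi i\xi+2\pi^2\eta^2|$ uniformly $O(|\omega|^{-1/2})$ (your case split at $|\eta|=|\omega|^{1/2}$ is exactly right); this removes both the angular dichotomy — no appeal to Kahane's image-measure moments at all — and the random time $T$. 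What you pay for is the uniform-in-frequency control of $S(\omega)$: the paper only needs moment bounds at countably many lattice points plus the doubling comparison lemma, whereas you must fix a continuous modification of $\omega\mapsto S(\omega)$ and run a genuine chaining argument, using that $S(\omega)$ has sub-Gaussian tails (integrand of modulus one, so BDG/exponential-martingale bounds apply uniformly in $\omega$) and that increments are Lipschitz in $\omega$ up to the random factor $1+\sup_{t\le 1}|W_t|$, which has Gaussian moments. Your bookkeeping — exponentially many grid points per dyadic annulus against Gaussian tails, then Borel--Cantelli in $n$ — is the right one and yields $\sup_{2^n\le|\omega|\le 2^{n+1}}|S(\omega)|\lesssim\sqrt{n}$, hence $|\widehat\mu(\omega)|=O(|\omega|^{-1/2}\sqrt{\log|\omega|})$, matching \eqref{eq:fourierdecay}. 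One point to make explicit when writing this up: the It\=o identity holds almost surely for each \emph{fixed} frequency, so to use it simultaneously for all frequencies you should work with the continuous version of $S$ produced by the chaining step and pass from a countable dense set of frequencies by continuity of both sides; with that said, I see no gap, and your route is arguably more self-contained than the paper's since it bypasses both Kahane's moment estimate for Brownian images and his lattice-to-continuum lemma.
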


Moreover, the random measure $\mu$ we use to realise the Fourier dimension is Lebesgue measure $dt$ on $[0,1]$ lifted onto the graph $G(W)$ via the mapping $t \mapsto (t,W_t)$.  The precise estimate we obtain is that almost surely 
\begin{align}\label{eq:fourierdecay}|\widehat{\mu}(\xi)| = O( |\xi|^{-\frac{1}{2}} \sqrt{\log |\xi|}), \quad \text{as } |\xi| \to \infty,\end{align}
which combined with Theorem \ref{thm:prequel} yields Theorem \ref{thm:main}.  

A natural direction in which to continue this line of research would be to study other Gaussian processes with different covariance structure such as the fractional Brownian motion.

\subsection{Methods: It\=o calculus and reduction to Brownian images}

The key method we introduce to estimate the Fourier transform of the graph measure $\mu$  is based on \textit{It\=o calculus}, which has previously been a natural framework in the theory of stochastic differential equations. As far as we know, It\=o calculus has not been previously considered in this Fourier analytic context. Here we discuss this method and give a brief summary of the main steps in the proof. When written in polar coordinates, \eqref{eq:fourierdecay} asks about the rate of decay for the integral
$$\widehat{\mu}(\xi) = \int_0^1 \exp(-2\pi i u(t\cos \theta + W_t\sin \theta)) \, dt$$
for $\xi = u(\cos \theta,\sin\theta)\in \R^2$, $u > 0$, $\theta \in [0,2\pi)$, as $u \to \infty$. There are two distinct cases  we will consider depending on the direction of $\xi$, which we give a heuristic description of here.

If we ignore the random component $W_t \sin \theta$, that is, set $\theta = 0$ or $\pi$, then standard integration using the chain rule shows that $\widehat{\mu}(\xi)$ equals the Fourier transform of Lebesgue measure $dt$ at $u$, which decays to $0$ with the polynomial rate $u^{-1} = |\xi|^{-1}$, so we are done for these directions. However, if $\theta$ is not equal to $0$ or $\pi$, we still have a small random (non-smooth) term $W_t \sin\theta$, so a classical change of variable formula or other tools from classical analysis cannot be used.
 
 The key observation is that we can write $\widehat{\mu}(\xi) = \int \exp(iX_t) \, dt$, where the stochastic process $X_t$ satisfies the stochastic differential equation
 $$dX_t := b dt + \sigma dW_t$$
identifying it as a so called \textit{It\=o drift-diffusion process}, where $b := -2\pi u \cos \theta$ is the \textit{drift coefficient} of $X_t$ and $\sigma := -2\pi u \sin \theta$ is the \textit{diffusion coefficient} of $X_t$. Such processes have many useful analytic tools from It\=o calculus (see Section \ref{sec:ito}) associated to them, in particular \textit{It\=o's lemma}, which works as an analogue for the chain rule. The price we pay is that It\=o's lemma introduces some multiplicative error terms involving stochastic integrals, but they can be estimated with other tools from It\=o calculus using moment analysis.

The estimates we obtain from It\=o calculus allow us to obtain the correct Fourier decay \eqref{eq:fourierdecay} for $\mu$ when $\theta$ is close to $0$ or $\pi$ with respect to $u^{-1}$ (more precisely, $|\sin \theta| < u^{-1/2}$). In other words, when $\xi$ is close to pointing in the horizontal directions. Thus another estimate is needed for $\theta$ bounded away from $0$ and $\pi$. This is where Kahane's classical work \cite{Ka1} on Brownian images comes into play. If we completely ignore the deterministic component $t \cos \theta$, by setting $\theta = \pi/2$ or $3\pi/2$, then $\widehat\mu(\xi)$ is the Fourier transform the Brownian image measure $\nu$, that is the $t \mapsto W_t$ push-forward of the Lebesgue measure $dt$ on $[0,1]$ at $u$. Kahane \cite{Ka1} in fact already established that the decay of $|\widehat{\nu}(u)|$ is almost surely of the order $u^{-1} \sqrt{\log u} = |\xi|^{-1} \sqrt{\log|\xi|}$ so \eqref{eq:fourierdecay} holds for these directions. A modification  of Kahane's argument reveals that whenever $\theta \neq 0$ or $\pi$, then almost surely
\begin{align*}|\widehat{\mu}(\xi)| = O(|\sin \theta|^{-1}|\xi|^{-1} \sqrt{\log|\xi|})\end{align*}
see the discussion in Section \ref{sec:vertical}. Now one notices that when $\theta$ approaches $0$ or $\pi$, this estimate blows up, and so one cannot obtain a uniform estimate over all directions from this. However, this gives \eqref{eq:fourierdecay} if $|\sin \theta| \geq u^{-1/2}$, so combining with the estimates we obtained through It\=o calculus, we are done. See Section \ref{sec:proof} for more details on the main steps of the proof.  

\subsection{Other measures on the Brownian graph}

Theorem \ref{thm:main} and \eqref{eq:fourierdecay} gives Fourier decay for the push-forward of the Lebesgue measure on $[0,1]$ onto the graph $G(W)$. It would be an interesting problem to see if one can have similar results for other, possibly fractal, measures on $[0,1]$. A possible problem could be:

\begin{problem} Classify measures $\tau$ on $[0,1]$ such that for some $0 < s \leq 1$ we have
$$|\widehat{\tau}(\xi)| = O(|\xi|^{-s/2}), \quad |\xi| \to \infty,$$
and their lift $\mu_\tau$ onto the graph of $G(W)$ under $t \mapsto (t,W_t)$ satisfies
$$|\widehat{\mu_\tau}(\xi)| = O(|\xi|^{-s'/2}), \quad |\xi| \to \infty$$
for any $s'<s$.
\end{problem}

This is motivated by the fact that in Kahane's work \cite{Ka1} it is possible to transfer information on the Fourier decay (or Frostman properties) of $\tau$ onto the image measure. Thus for directions $\theta$ bounded away from $0$ and $\pi$ we could still bound $\widehat{\mu_\tau}(\xi)$ using Kahane's work. The main problem in generalising our approach to fractal measures $\tau$ on $[0,1]$ comes from the lack of an appropriate analogue of It\={o} calculus.

\subsection{Organisation of the paper}
 In Section \ref{sec:ito} we give the necessary background from It\=o calculus. In Section \ref{sec:proof} we will give the proof of our main result Theorem \ref{thm:main}. The key estimates are obtained in Section \ref{sec:horizontal} and Section \ref{sec:vertical} corresponding to the two cases discussed above. 

\section{It\={o} calculus}\label{sec:ito}

\subsection{Stochastic integration} \label{sec:stochastic}

In the proof of the main result Theorem \ref{thm:main} we end up studying integrals of the form $\int f(X_t) \, dt$ for some stochastic processes $X_t$ and smooth scalar functions $f$. As standard analysis methods cannot be applied to these integrals, we need theory from stochastic analysis. Stochastic analysis provides a pleasant framework to deal with non-smooth processes, such as the Wiener process $W_t$, and still preserves many of the classical features present in the smooth setting. In this section we discuss the specific tools from \textit{It\=o calculus} which we will rely on. The main references for this section are given in the book \cite{KS}.

Let $(\Omega,\cF,(\cF_t)_{t \geq 0},\P)$ be a filtered probability space, that is, $\cF_t \subset \cF$ is an increasing filtration in $t$. Let $W = W_t$ be the Wiener process adapted to this filtered probability space, that is, $W_t$ is $\cF_t$ measurable and for each $t,s \geq 0$ the increment $W_{t+s}
 - W_t$ is independent of $\cF_t$. We say that an $\R$ or $\C$ valued stochastic process $Z_t$ is \textit{adapted} if it is $\cF_t$ measurable for all $t \geq 0$.  We will say that a real or complex valued adapted process $Z_t$ is $W_t$ \textit{integrable} if the \textit{quadratic variation} $\int_0^T |Z_t|^2 \, dt < \infty$ for any time $T \geq 0$. Given a real valued adapted $W_t$ integrable stochastic process $X_t$, then $\P$ almost surely for any time $T \geq 0$ it is possible to construct a \textit{stochastic integral}
$$\int_0^T X_t \, dW_t$$
of $X_t$ with respect to $W_t$ in the sense of It\=o, see \cite[Chapter 3.2]{KS}. We use the differential notation $dU_t = X_t \, dW_t$ to mean that $\P$ almost surely $U_T - U_0$ is the stochastic integral of $X_t$ with respect to $W_t$ at time $T \geq 0$.

We mainly deal with complex valued stochastic processes so for the sake of convenience we will also define the \textit{complex valued} stochastic integral for a $\C$ valued $W_t$ integrable adapted process $Z_t$ is defined coordinate-wise using real integrals:
$$\int_0^T Z_t \, dW_t := \int_0^T \mathrm{Re}\, Z_t \, dW_t+i\int_0^T \mathrm{Im}\, Z_t \, dW_t,$$
where the real integrals are standard $\R$ valued stochastic integrals with respect to the Wiener process $W_t$. We write $dZ_t := dX_t + idY_t$ for a complex valued process $Z_t = X_t + iY_t$ with $\R$ valued $X_t$ and $Y_t$.

\subsection{It\={o} drift-diffusion processes} \label{sec:itodiffusion}

The main class of adapted processes to which we apply It\=o calculus is given by Wiener processes with drift and diffusion coefficients. These are called It\=o drift-diffusions:

\begin{definition}[It\=o drift-diffusion process]\label{def:ito} A real or complex valued adapted stochastic process $X_t$ is called a \textit{It\=o drift-diffusion process} if there exists a Lebesgue integrable adapted $b_t$ and $W_t$ integrable adapted $\sigma_t$ such that $X_t$ satisfies the stochastic differential equation
$$dX_t = b_t \,dt + \sigma_t \,dW_t.$$
\end{definition}

For It\=o drift-diffusion processes there exists the following important analogue of the change of variable formula, which follows from robustness of Taylor expansions for stochastic differentials:

\begin{lemma}[It\={o}'s lemma]\label{lma:ito} Let $X_t$ be an It\=o drift-diffusion process and $f : \R \to \R$ twice differentiable. Then $f(X_t)$ is an It\=o drift-diffusion process such that $\P$ almost surely for any $T \geq 0$ we have
$$f(X_T) - f(X_0) = \int_0^T \Big(b_t f'(X_t) + \frac{\sigma_t^2}{2} f''(X_t) \Big) \, dt + \int_0^T \sigma_t f'(X_t) \, dW_t.$$
\end{lemma}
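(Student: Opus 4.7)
The plan is to follow the classical route for It\=o's formula: reduce to a tractable case by localisation, then prove the identity on a partition via a second-order Taylor expansion, and finally identify the correction term through the quadratic variation of the Wiener process. First, by introducing the stopping times $\tau_n := \inf\{t \geq 0 : |X_t| + \int_0^t (|b_s| + \sigma_s^2)\,ds \geq n\}$ and replacing $X_t$ by $X_{t \wedge \tau_n}$, I may assume that $X_t$, $b_t$ and $\sigma_t$ are uniformly bounded on $[0,T]$; by smoothly cutting $f$ off outside a large ball I may further assume $f \in C^2(\R)$ with $f, f', f''$ all bounded. The general statement then follows by letting $n \to \infty$ and applying dominated convergence in the Lebesgue integrals together with the It\=o isometry for the stochastic integral (cf.\ \cite[Chapter~3.2]{KS}).

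Second, fix a partition $0 = t_0 < t_1 < \cdots < t_n = T$ with mesh $\delta$ and telescope $f(X_T) - f(X_0) = \sum_k [f(X_{t_{k+1}}) - f(X_{t_k})]$. Taylor's theorem with remainder gives
$$f(X_{t_{k+1}}) - f(X_{t_k}) = f'(X_{t_k})\Delta_k + \tfrac{1}{2} f''(X_{t_k}) \Delta_k^2 + R_k,$$
where $\Delta_k := X_{t_{k+1}} - X_{t_k}$ and $|R_k| \lesssim |\Delta_k|^3$. Substituting
$$\Delta_k = \int_{t_k}^{t_{k+1}} b_s\,ds + \int_{t_k}^{t_{k+1}} \sigma_s\,dW_s,$$
the first-order sum $\sum_k f'(X_{t_k}) \Delta_k$ converges in $L^2(\P)$ to $\int_0^T f'(X_t) b_t\,dt + \int_0^T f'(X_t) \sigma_t\,dW_t$: the deterministic part is a Riemann approximation of a Lebesgue integral, while the stochastic part is an approximation by elementary adapted processes whose limit is given by the It\=o isometry.

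Third, I would handle the quadratic term $\tfrac{1}{2} \sum_k f''(X_{t_k}) \Delta_k^2$. Expanding $\Delta_k^2$ yields a pure drift piece of size $O((\Delta t_k)^2)$ in $L^\infty$, a mixed term controlled by Cauchy--Schwarz as $O(\Delta t_k \cdot (\Delta t_k)^{1/2})$ in $L^2$, and the essential piece $\bigl(\int_{t_k}^{t_{k+1}} \sigma_s\,dW_s\bigr)^2$; the first two sum to $0$ as $\delta \to 0$. The essential piece, weighted by $f''(X_{t_k})$, converges in $L^2(\P)$ to $\int_0^T f''(X_t) \sigma_t^2\,dt$: this is shown by comparing each $\bigl(\int_{t_k}^{t_{k+1}} \sigma_s\,dW_s\bigr)^2$ with $\int_{t_k}^{t_{k+1}} \sigma_s^2\,ds$ and using the martingale identity
$$\E\Bigl[\Bigl(\int_{t_k}^{t_{k+1}} \sigma_s\,dW_s\Bigr)^2 - \int_{t_k}^{t_{k+1}} \sigma_s^2\,ds \,\Big|\, \cF_{t_k}\Bigr] = 0$$
to kill the cross-covariance terms in the variance. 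The cubic remainder satisfies $\sum_k |R_k| = O(\delta^{1/2})$ in $L^1$, hence vanishes. Selecting a suitable subsequence then upgrades $L^2$ convergence to almost sure equality.

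The main obstacle is precisely the quadratic-variation step: it produces the correction $\tfrac{1}{2}\int_0^T \sigma_t^2 f''(X_t)\,dt$ absent from the ordinary chain rule, and is sensitive to the evaluation point -- one needs $f''(X_{t_k}) \sigma_{t_k}^2$ at the \emph{left} endpoint so that it is $\cF_{t_k}$-measurable and the subsequent Brownian increments are independent of it. This adaptedness is exactly what forces the cross terms in the variance computation to vanish by orthogonality of martingale increments and pins down the factor $\tfrac12$. Once this step is in place, every remaining passage to the limit is routine.
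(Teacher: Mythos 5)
The paper does not prove this lemma at all: it is quoted as a classical result, with the pathwise form attributed to Karatzas--Shreve \cite[Theorem 3.3]{KS}. Your proposal is essentially the standard textbook proof of It\=o's formula (localisation, Taylor expansion along a partition, identification of the quadratic-variation correction via orthogonality of martingale increments), i.e.\ the same argument that underlies the cited reference, so in substance you are reproving the quoted result rather than diverging from the paper.

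Two technical points deserve tightening. First, your remainder bound $|R_k| \lesssim |\Delta_k|^3$ implicitly assumes $f''$ is locally Lipschitz (e.g.\ $f \in C^3$); the lemma only assumes $f$ twice differentiable, and a smooth cut-off does not upgrade the regularity of $f''$. The standard fix is to write $R_k = \tfrac12\bigl(f''(\xi_k)-f''(X_{t_k})\bigr)\Delta_k^2$ and use uniform continuity of $f''$ on the (localised) range of $X$, or to first prove the formula for smooth $f$ and pass to general $C^2$ functions by mollification, using local uniform convergence of $f,f',f''$. Second, your limiting argument yields the identity almost surely \emph{for each fixed} $T$, whereas the lemma asserts a single null set off which the identity holds \emph{for all} $T \geq 0$ simultaneously; this pathwise form is exactly what the paper later needs, since it applies the formula at a random time $T_\omega$ that is not a stopping time. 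You should therefore add the routine upgrade: both sides are continuous in $T$ (taking the continuous version of the stochastic integral), so a.s.\ equality for all rational $T$ extends to all $T$ outside one null set. With these two repairs the argument is complete and matches the classical proof the paper cites.
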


It\={o}'s lemma was given in this pathwise form in \cite[Theorem 3.3]{KS}. By using the definition of the complex valued stochastic integral, we can also obtain a complex valued It\=o's lemma as follows:

\begin{lemma}[Complex It\={o}'s lemma]\label{lma:itocomplex} Let $X_t$ be an It\=o drift-diffusion process and $f : \R \to \C$ twice differentiable. Then $f(X_t)$ is an It\=o drift-diffusion process such that for $\P$ almost surely for any $T \geq 0$ we have
$$f(X_T) - f(X_0) = \int_0^T \Big(b_t f'(X_t) + \frac{\sigma_t^2}{2} f''(X_t) \Big) \, dt + \int_0^T \sigma_t f'(X_t) \, dW_t.$$
\end{lemma}

\begin{proof}
We can write $f = f_1 + if_2$ for real valued twice differentiable $f_1,f_2 : \R \to \R$. Then the derivatives $f' = f_1' + if_2'$ and $f'' = f_1'' + if_2''$. Moreover, by It\=o's lemma (Lemma \ref{lma:ito}) we obtain for each $j = 1,2$ that
$$df_j(X_t) = \Big(b_t f_j'(X_t) + \frac{\sigma_t^2}{2} f_j'(X_t)\Big) \, dt + \sigma_t f_j'(X_t) \, dW_t.$$
Then by the convention $df(X_t) = df_1(X_t) + idf_2(X_t)$ this gives
$$df(X_t) = \Big(b_t f'(X_t) + \frac{\sigma_t^2}{2} f''(X_t)\Big) \, dt + \sigma_t f'(X_t) \, dW_t$$
as required.
\end{proof}

\subsection{Moment estimation} It\=o's lemma allows us to pass from integrals of the form $\int_0^T f(X_t) \, dt$  to $\int_0^T g(X_t) \, dW_t$ for functions $g$ obtained from derivatives of $f$. In our case we will end up trying to understand the higher moments of the stochastic integrals $\int_0^T g(X_t) \, dW_t$, which will tell us about the distribution of these integrals. A very standard tool to compute the moments in It\=o calculus are the \textit{It\=o isometry} and more general \textit{Burkholder-Davis-Gundy inequalities} (see \cite{BDG}), which allows us to pass from stochastic integrals to their quadratic variations (that just involve Lebesgue integral). 

\begin{lemma}[Burkholder-Davis-Gundy inequality]\label{lma:BDG} Let $X_t$ be a real valued $W_t$ integrable adapted process. Then for all $1 \leq p < \infty$ we have
$$\E\Big[\Big(\sup_{0 \leq s \leq 1}\Big|\int_0^s X_t \, dW_t\Big|\Big)^{2p}\Big] \leq 2 \sqrt{10p} \ \E \Big[\Big(\int_0^1 X_t^2 \, dt\Big)^p\Big].$$
\end{lemma}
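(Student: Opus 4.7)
The plan is to derive the claim from It\={o}'s formula combined with Doob's maximal inequality, which is the classical route to BDG-type estimates of the form in \cite{KS}. First I would reduce to the case where $X_t$ is bounded, so that $M_t := \int_0^t X_s\, dW_s$ is a genuine square-integrable martingale and not merely a local one: introduce the stopping times $\tau_n := \inf\{t \geq 0 : \int_0^t X_s^2\, ds \geq n\}$, establish the inequality with $M_{\tau_n \wedge s}$ replacing $M_s$, and pass to the limit $n \to \infty$ via Fatou's lemma on the left and monotone convergence on the right.

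With $X_t$ bounded, the main step is to apply It\={o}'s formula (Lemma~\ref{lma:ito}) to the $C^2$ function $f(x) = x^{2p}$. Since $M_t$ is an It\={o} drift-diffusion process with zero drift and diffusion coefficient $X_t$, the formula yields
$$M_1^{2p} = 2p\int_0^1 M_t^{2p-1} X_t\, dW_t + p(2p-1)\int_0^1 M_t^{2p-2} X_t^2\, dt.$$
Taking expectations kills the stochastic integral (by boundedness of $X_t$ it is a true martingale), and bounding the integrand pointwise by the running supremum gives
$$\E[M_1^{2p}] \leq p(2p-1)\, \E\!\left[\Bigl(\sup_{s \leq 1}|M_s|\Bigr)^{2p-2} \langle M\rangle_1\right],$$
where $\langle M\rangle_1 := \int_0^1 X_t^2\, dt$. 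H\"older's inequality with exponents $p/(p-1)$ and $p$ then separates the two factors on the right.

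Next I would close the loop using Doob's $L^{2p}$ maximal inequality applied to the non-negative submartingale $|M_t|$, which yields $\E[(\sup_{s\leq 1}|M_s|)^{2p}] \leq (2p/(2p-1))^{2p}\, \E[M_1^{2p}]$. Writing $A := \E[(\sup_{s\leq 1}|M_s|)^{2p}]$ and $B := \E[\langle M\rangle_1^p]$, the combination of the estimates above collapses to a self-bounding inequality of the form $A \leq c_p\, A^{(p-1)/p} B^{1/p}$, which rearranges to $A \leq c_p^{\,p}\, B$ --- precisely the shape of the claim.

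The main obstacle is arithmetic rather than conceptual: the naive bookkeeping sketched above produces a constant $c_p^p$ of order $p^{O(p)}$, which is considerably cruder than the $2\sqrt{10p}$ recorded in the statement. Getting the refined constant requires either replacing the H\"older/Doob combination by a good-$\lambda$ inequality, or invoking Burkholder's original discrete-time martingale argument with sharp constants and transferring it to the continuous-time setting; both routes are treated in \cite{KS,BDG}, from which we quote the result. For our applications of this lemma in Section~\ref{sec:horizontal} only the existence of \emph{some} polynomial-in-$p$ constant is needed, so either version of the argument suffices.
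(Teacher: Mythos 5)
The paper offers no proof of this lemma at all: it is quoted from Peskir \cite{peskir}, with the remark that the constant $2\sqrt{10p}$ comes from there. So your final move --- deferring the precise constant to the literature --- is exactly what the paper itself does, and the sketch you give on top of that (localisation by the stopping times $\tau_n$, It\=o's formula for $x^{2p}$, killing the stochastic integral after truncation, the pointwise bound by the running supremum, H\"older, Doob's $L^{2p}$ maximal inequality, and the self-bounding rearrangement $A\le c_p A^{(p-1)/p}B^{1/p}\Rightarrow A\le c_p^p B$) is a sound classical derivation of a BDG-type inequality with a constant of order $(cp^2)^p$. As a proof of ``some BDG inequality adequate for this paper'' it is fine, and more self-contained than the paper's citation.

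The one genuine problem is in your last paragraph, where you propose to recover the prefactor $2\sqrt{10p}$ via a good-$\lambda$ argument or Burkholder's sharp discrete-time proof from \cite{KS,BDG}. That cannot succeed, because the inequality with the constant $2\sqrt{10p}$ as literally printed is false: take $X_t\equiv 1$, so the right-hand side equals $2\sqrt{10p}$ while the left-hand side is $\E\big[\sup_{0\le s\le 1}|W_s|^{2p}\big]\ge \E|W_1|^{2p}=(2p-1)!!$, which is $15>2\sqrt{30}$ already at $p=3$ and grows super-exponentially in $p$. The $\sqrt{p}$-growth is the sharp behaviour of the \emph{$L^{2p}$-norm} constant in BDG, so the quoted estimate should be read with the constant raised to the power $2p$, i.e. $\E\big[(\sup_{s\le 1}|\int_0^s X\,dW|)^{2p}\big]\le (2\sqrt{10p})^{2p}\,\E\big[(\int_0^1 X_t^2\,dt)^p\big]$; the obstacle you describe is thus a defect of the statement, not of your argument. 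Your closing sentence also needs a small repair: your own route gives $p^{O(p)}$, which is not ``polynomial in $p$'', so under your stated criterion it would not suffice. What the application in Section \ref{sec:horizontal} actually tolerates is any constant $K_p$ whose $2p$-th root grows at most polynomially in $p$, since Lemma \ref{lma:0Tintegral} takes $p=p_{\k}\approx\log|\k|$ and only $K_{p}^{1/(2p)}$ survives. Both versions pass this test: your $(cp^2)^p$ has root $\sim p\approx\log|\k|$, still enough for Theorem \ref{thm:main} though slightly weaker than \eqref{eq:fourierdecay}, while the corrected $(2\sqrt{10p})^{2p}$ has root $\sim\sqrt{p}$, reproducing the $\sqrt{\log|\xi|}$ factor claimed in \eqref{eq:fourierdecay}.
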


This version with the constant $2\sqrt{10p}$ was given by Peskir \cite{peskir}.

\section{Proof of the main result}
\label{sec:proof}

\subsection{Preliminaries and overview of the proof}
\label{sec:mainsteps}

Let us now review how we will prove \eqref{eq:fourierdecay} and thus Theorem \ref{thm:main}. Fix $\xi = u(\cos \theta,\sin\theta)\in \R^2$ with modulus $u > 0$ and argument $\theta \in [0,2\pi)$. Notice that by the definition of the graph measure $\mu$, the Fourier transform has the form
$$\widehat{\mu}(\xi) = \int_0^1 \exp(iX_t) \, dt,$$
where $X_t$ is the real valued stochastic process
\begin{align}\label{eq:Xprocess} X_t := -2\pi u (t \cos \theta + W_t \sin \theta).\end{align}
The first observation is that $X_t$ is an adapted $W_t$ integrable process and in fact an It\=o drift-diffusion process (recall Definition \ref{def:ito}) satisfying
$$dX_t = b \,dt + \sigma \,dW_t$$
for deterministic and time independent coefficients $b = -2\pi u \cos \theta$ and $\sigma = -2\pi u \sin \theta$. The proof of bounding $\widehat{\mu}(\xi)$ will heavily depend on the value of the angle $\theta$ we have for $\xi$ and in particular how close the determining angle $\theta$ is to $0$, $\pi$ or $2\pi$ is with respect to $u^{-1/2}$. For this purpose, we define the notions of \textit{horizontal} and \textit{vertical} angles:

\begin{definition}[Horizontal and vertical angles]
Define the threshold angle
$$\theta_u := \min\{u^{-1/2},\tfrac{\pi}{4}\}.$$
Partition the angles $[0,2\pi)$ using $\theta_u$ into the \textit{horizontal angles}
$$H_u := [0,\theta_u] \cup [\pi - \theta_u,\pi+\theta_u] \cup [2\pi - \theta_u,2\pi).$$
and the \textit{vertical angles}
$$V_u := [0,2\pi) \setminus H_u.$$
In other words $H_u$ contains the $\theta_u$ neighborhoods of $0$ and $\pi$ on the circle mod $2\pi$ and $V_u$ the $\pi/2 - \theta_u$ neighborhoods of $\pi/2$ and $3\pi/2$ respectively, see Figure \ref{fig:angles}.
\end{definition}

\begin{figure}[ht]\label{fig:angles}
\includegraphics[scale=0.75]{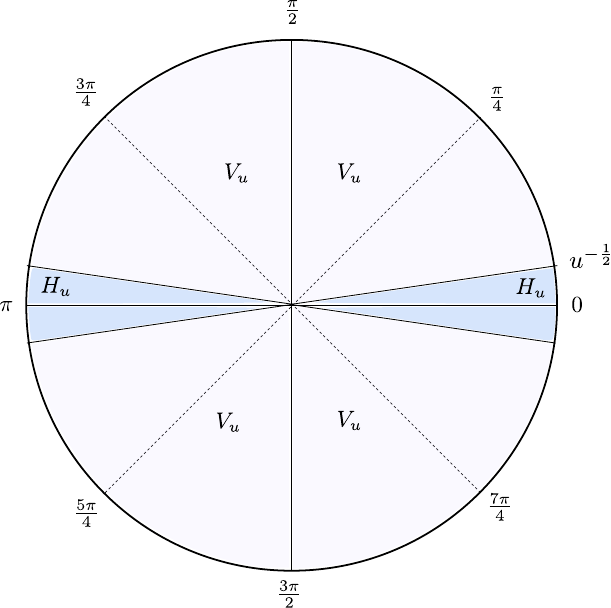}
\caption{Splitting of $[0,2\pi)$ to horizontal angles $H_u$ and the vertical angles $V_u$}
\end{figure}

The proof will split into two cases in Sections \ref{sec:horizontal} and \ref{sec:vertical} for bounding the Fourier transform $\widehat{\mu}(\xi)$ depending on whether $\theta \in H_u$ or $\theta \in V_u$.

\begin{itemize}
\item[(1)] Section \ref{sec:horizontal} concerns angles $\theta \in H_u$, that is, close to horizontal directions $0$ or $\pi$ and as mentioned in the introduction our main hope here is that the smallness (with respect to $u^{-1/2}$) of the diffusion component $b W_t$ will help us in transferring the decay of Lebesgue measure to the decay of $\widehat{\mu}$. This is where It\=o's lemma (see Lemma \ref{lma:itocomplex}) becomes crucial as it can be applied to the process $f(X_t)$ with the function $f(x) = \exp(ix)$.
\item[(2)] Section \ref{sec:vertical} handles the angles $\theta \in V_u$ and here the plan is to use the fact that we are $u^{-1/2}$ bounded away from horizontal angles to ignore the drift component $b t$ of the drift-diffusion process $X_t$ and apply Kahane's bound for these directions. This turns out to be possible due to a representation of the higher moments Kahane obtained in his result on Brownian images.
\end{itemize}

It turns out that in both  Sections \ref{sec:horizontal} and \ref{sec:vertical} we only obtain decay of the Fourier transform $\widehat{\mu}(\k)$ for $\k$ in an $\eps$-grid $\eps \Z^2$ for all small $\eps > 0$. Here the randomness will depend on $\eps > 0$ but thanks to an argument also used by Kahane in \cite{Ka1}, one can pass from this information to the full decay almost surely. See Section \ref{sec:completion} for the details. 

Let us now proceed to bound $|\widehat{\mu}(\xi)|$. In both Sections \ref{sec:horizontal} and \ref{sec:vertical} below we will end up bounding trigonometric functions with respect to $\theta_u$ and for this purpose we will need the following standard bounds, which we record here for convenience:

\begin{lemma}[Trigonometric bounds]\label{lma:trigonometric} We have the following bounds:
\begin{itemize}
\item[(1)] If $\theta \in H_u$, then
$$|\sin \theta| \leq u^{-1/2} \quad \text{and}\quad |\cos \theta| \geq \tfrac{1}{\sqrt{2}}.$$
\item[(2)] If $\theta \in V_u$, then
$$|\sin \theta| \geq \min\left\{\tfrac{2}{\pi}u^{-1/2}, \tfrac{1}{\sqrt{2}}\right\}.$$
\end{itemize}
\end{lemma}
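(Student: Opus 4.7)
Both parts are elementary trigonometric estimates that reduce to bounding the sine and cosine by the distance of $\theta$ (mod $\pi$) to the nearest multiple of $\pi$. For $\theta \in [0, 2\pi)$ let $d(\theta) := \min_{k \in \Z} |\theta - k\pi| \in [0, \pi/2]$; then by periodicity and symmetry of the trigonometric functions one has $|\sin \theta| = \sin d(\theta)$ and $|\cos \theta| = \cos d(\theta)$.

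For part (1), if $\theta \in H_u$ then by definition $d(\theta) \leq \theta_u$. Since $\sin$ is increasing on $[0,\pi/2]$ and $\sin x \leq x$ for $x \geq 0$, the bound $\theta_u \leq u^{-1/2}$ immediately gives
\[
|\sin \theta| = \sin d(\theta) \leq d(\theta) \leq \theta_u \leq u^{-1/2}.
\]
For the cosine bound, the other constraint $\theta_u \leq \pi/4$ combined with the monotonicity of $\cos$ on $[0,\pi/2]$ yields
\[
|\cos \theta| = \cos d(\theta) \geq \cos \theta_u \geq \cos(\pi/4) = \tfrac{1}{\sqrt{2}}.
\]

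For part (2), $\theta \in V_u$ means $d(\theta) > \theta_u$. I would split into the two cases determined by which term realises $\theta_u = \min\{u^{-1/2}, \pi/4\}$. If $\theta_u = u^{-1/2}$, apply Jordan's inequality $\sin x \geq \tfrac{2}{\pi} x$ on $[0,\pi/2]$ to get
\[
|\sin \theta| = \sin d(\theta) \geq \tfrac{2}{\pi} d(\theta) \geq \tfrac{2}{\pi} u^{-1/2}.
\]
If instead $\theta_u = \pi/4$ (so $u^{-1/2} \geq \pi/4$), monotonicity of $\sin$ on $[0,\pi/2]$ gives $|\sin \theta| \geq \sin(\pi/4) = 1/\sqrt{2}$. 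In either case $|\sin \theta|$ is at least $\min\{\tfrac{2}{\pi} u^{-1/2}, \tfrac{1}{\sqrt{2}}\}$, proving (2).

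There is no real obstacle here; the only mild subtlety is remembering that $2\pi - \theta_u$ belongs to $H_u$ because it is within $\theta_u$ of $0 \equiv 2\pi$ modulo $2\pi$, which is already built into the definition of $d(\theta)$ and of $H_u$. The constants $\tfrac{2}{\pi}$ and $\tfrac{1}{\sqrt 2}$ come precisely from Jordan's inequality and the fact that $\theta_u$ is capped at $\pi/4$.
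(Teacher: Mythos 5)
Your proof is correct and follows essentially the same route as the paper: reduce to the first quadrant by symmetry (you make this explicit via $d(\theta)$, the paper invokes ``standard invariance identities''), then use $\tfrac{2}{\pi}x \le \sin x \le x$ on $[0,\pi/2]$, monotonicity of $\cos$, and the cap $\theta_u \le \pi/4$. Your explicit case split on which term attains $\min\{u^{-1/2},\pi/4\}$ is exactly what the paper's argument does implicitly, so there is nothing to add.
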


\begin{proof}
For $\alpha \in [0,\tfrac{\pi}{2}]$ we have that both $\cos$ and $\sin$ are non-negative. Moreover, here $\frac{2}{\pi} \alpha \leq \sin \alpha \leq \alpha$. Thus for $\theta \in  [0,\theta_u]$ we have
$$\sin \theta \leq \theta \leq \theta_u \leq u^{-1/2} \quad \text{and}\quad \cos \theta \geq \cos \theta_u \geq \cos \tfrac{\pi}{4} = \tfrac{1}{\sqrt{2}}.$$
and for $\theta \in (\theta_u,\tfrac{\pi}{2}]$ as $\sin \tfrac{\pi}{4} = \frac{1}{\sqrt{2}}$ we obtain
$$\sin \theta \geq \min\left\{\tfrac{2}{\pi}u^{-1/2}, \tfrac{1}{\sqrt{2}}\right\}.$$
This gives the claim as we may reduce the estimates back to the estimates for $\theta \in [0,\tfrac{\pi}{2}]$ by using standard invariance identities for $\sin$ and $\cos$.
\end{proof}
 
\subsection{Horizontal angles}
\label{sec:horizontal}

When $\theta \in H_u$ we will first obtain the following estimate on $\eps$-grids:

\begin{lemma}\label{lma:ourbound} Fix $\eps > 0$. Almost surely there exists a random constant $C_\omega > 0$ such that for any $\k = u(\cos \theta,\sin\theta) \in \eps\Z^2 \setminus \{0\}$ with $\theta \in H_u$ we have
$$|\widehat{\mu}(\k)| \leq C_\omega |\k|^{-1/2}.$$
\end{lemma}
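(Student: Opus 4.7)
The plan is to use complex It\=o calculus to represent $\widehat{\mu}(\k) = \int_0^1 e^{iX_t}\,dt$ in a form that exposes polynomial decay from the horizontal drift, and to combine it with Burkholder-Davis-Gundy moment estimates and a Borel-Cantelli argument to obtain uniform control over the admissible lattice points.

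For fixed $\k$, the process $X_t = bt + \sigma W_t$ in \eqref{eq:Xprocess}, with $b = -2\pi u\cos\theta$ and $\sigma = -2\pi u\sin\theta$, is an It\=o drift-diffusion with constant coefficients. Applying the complex It\=o lemma (Lemma \ref{lma:itocomplex}) to $f(x) = e^{ix}$, whose derivatives are $f'(x) = ie^{ix}$ and $f''(x) = -e^{ix}$, and using $X_0 = 0$, yields
\begin{align*}
e^{iX_1} - 1 \;=\; \bigl(ib - \tfrac{\sigma^2}{2}\bigr)\widehat{\mu}(\k) \;+\; i\sigma\, Y_\k, \qquad Y_\k := \int_0^1 e^{iX_t}\,dW_t,
\end{align*}
which rearranges to $\widehat{\mu}(\k) = (e^{iX_1} - 1 - i\sigma Y_\k)/(ib - \sigma^2/2)$. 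By Lemma \ref{lma:trigonometric}(1), for $\theta \in H_u$ one has $|b|\geq \sqrt{2}\,\pi u$ and $|\sigma|\leq 2\pi\sqrt{u}$, and since $|ib - \sigma^2/2|\geq |b|$, taking moduli gives the pointwise estimate
\begin{align*}
|\widehat{\mu}(\k)| \;\leq\; \frac{2}{|b|} + \frac{|\sigma|}{|b|}|Y_\k| \;\leq\; C_1\,|\k|^{-1} + C_2\,|\k|^{-1/2}|Y_\k|.
\end{align*}
Since $|\k|\geq \eps$, the first term is already $O(|\k|^{-1/2})$ with constant depending on $\eps$, so the entire problem reduces to producing an almost sure uniform bound on $|Y_\k|$ over the admissible $\k$.

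The integrand $e^{iX_t}$ of $Y_\k$ has unit modulus, so the quadratic variation of each of its real and imaginary parts is bounded by $1$. Splitting $Y_\k$ accordingly and applying the Burkholder-Davis-Gundy inequality (Lemma \ref{lma:BDG}) to each part separately yields uniform moment bounds $\E[|Y_\k|^{2p}]\leq C_p$ for every $p\in\N$, with $C_p$ independent of $\k$. Chebyshev's inequality then gives the tail bound $\P(|Y_\k|>M)\leq C_p/M^{2p}$, which I combine with a Borel-Cantelli argument arranged dyadically. Because the angular restriction $\theta\in H_u$ has width $\theta_u\sim |\k|^{-1/2}$, the number of admissible lattice points with $|\k|\in[2^n,2^{n+1})$ is only $O(2^{3n/2}/\eps^2)$; choosing $p$ large (depending on $\eps$) and allowing the Markov cutoff to grow appropriately slowly with $n$ if necessary, the sum of probabilities over the lattice becomes convergent, and Borel-Cantelli delivers almost surely a uniform bound on $|Y_\k|$ outside a finite exceptional set. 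Absorbing the finite set of exceptional lattice points into a possibly larger random constant $C_\omega$ then produces the claimed inequality.

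The main obstacle lies in this last uniformity step: a naive Markov bound with a constant threshold fails to be summable over the infinite lattice, so the interplay between the high-moment control from BDG and the angular thinning produced by restricting to the horizontal sector must be exploited carefully in the dyadic Borel-Cantelli argument in order to extract the sharp $|\k|^{-1/2}$ decay rate.
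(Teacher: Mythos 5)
Your It\=o-calculus reduction is correct and in fact streamlines the paper's route. Applying the complex It\=o lemma on all of $[0,1]$ and keeping the boundary term $e^{iX_1}-1$ (which has modulus at most $2$ and is divided by $|ib-\sigma^2/2|\ge |b|\ge \sqrt{2}\,\pi u$) lets you dispense with the paper's random time $T$, which was introduced precisely so that $f(X_T)=f(X_0)=1$, and with the separate estimate of $1-T$ on horizontal angles (Lemma \ref{lma:T1integral}). The identity for $\widehat{\mu}(\k)$, the bound $|\sigma|/|b|\le\sqrt{2}\,u^{-1/2}$ from Lemma \ref{lma:trigonometric}, and the $\k$-independent BDG moment bound $\E|Y_\k|^{2p}\le 4\sqrt{10p}\,2^p$ are all fine, so the lemma indeed reduces to an almost sure uniform bound on $|Y_\k|$ over the admissible lattice points.

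The way you propose to get that uniform bound, however, has a genuine gap. For a fixed $p$ the tail estimate $\P(|Y_\k|>M)\le C_p M^{-2p}$ does not decay in $|\k|$ at all, so for any constant threshold $M$ the Borel--Cantelli sum over the infinitely many admissible $\k$ diverges no matter how large $p$ is chosen; the angular thinning to $O(2^{3n/2}\eps^{-2})$ points per dyadic annulus cannot rescue this, since every summand is the same constant. Your fallback of letting the cutoff grow with $n$ forces $M_n\gtrsim 2^{3n/(4p)}$ for summability, and then your final estimate is only $|\widehat{\mu}(\k)|\lesssim |\k|^{-1/2+3/(4p)}$, not the claimed $C_\omega|\k|^{-1/2}$. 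The repair is to let the moment order, not the threshold, grow with the scale: since the BDG constant grows only like $4\sqrt{10p}\,2^p$ while the threshold contributes $M^{2p}$, taking a fixed $M$ with $M^2>2$ and $p=p_\k$ a sufficiently large multiple of $\log|\k|$ makes $\P(|Y_\k|>M)\le 4\sqrt{10p_\k}\,(2/M^2)^{p_\k}$ decay faster than, say, $|\k|^{-4}$, which is summable over $\eps\Z^2$ (the horizontal restriction is then not even needed). Equivalently, run the paper's weighted-moment device on $Y_\k$ itself: bound $\E\sum_{\k}|\k|^{-3}|Y_\k|^{2p_\k}/(4\sqrt{10p_\k}\,2^{p_\k})$ with $p_\k=\lfloor\log|\k|\rfloor$ and take $2p_\k$-th roots, which yields $\sup_{\k}|Y_\k|\le M_\omega<\infty$ almost surely (the finitely many $\k$ with $p_\k=0$ are absorbed into $M_\omega$). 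This scale-dependent choice of $p$ is exactly the mechanism in the paper's proof of Lemma \ref{lma:0Tintegral}; with it your argument closes, and is otherwise a mild simplification of the paper's.
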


Given $\xi = u(\cos \theta,\sin\theta) \in \R^2 \setminus \{0\}$ and a realisation $(W_t)$ define a random time $T = T_\omega (\xi) \in [0,1]$, which the minimum value of $t \in [0,1]$ such that
$$X_t = \begin{cases}-2\pi \lceil u ( \cos \theta + W_1 \sin \theta) \rceil, & \text{if } X_1 \geq 0;\\
-2\pi \lfloor u ( \cos \theta + W_1 \sin \theta) \rfloor, & \text{if } X_1 < 0.\\
\end{cases}
$$
Such a time $T$ exists almost surely since $X_0 = 0$ and $X_t$ is almost surely continuous (since $W_t$ is almost surely continuous). Splitting the integral of $Z_t$ up into `complete rotations' and `what is left over', one obtains
$$\int_0^1 Z_t \, dt \  = \  \int_{0}^{T} Z_t \, dt  \ + \  \int_{T}^1 Z_t \, dt.$$
For the integral over $[T,1]$ we get the following estimate.

\begin{lemma} 
\label{lma:T1integral}
Almost surely there exists a random constant $C_\omega > 0$ such that for any $\xi = u(\cos\theta,\sin\theta)  \in \R^2\setminus\{0\}$ with $\theta \in H_u$ we have
$$ \Big|\int_{T}^1 Z_t \, dt\Big| \leq C_\omega |\xi|^{-1/2}.$$
\end{lemma}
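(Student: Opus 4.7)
The plan is to reduce Lemma~\ref{lma:T1integral} to a simple bound on the length of the interval $[T,1]$. Since $|Z_t| = |\exp(iX_t)| = 1$, the triangle inequality gives
$$\Big|\int_T^1 Z_t\,dt\Big| \;\leq\; 1 - T,$$
so it suffices to show that almost surely
$$1 - T \;\leq\; C_\omega |\xi|^{-1/2}$$
for every $\xi = u(\cos\theta,\sin\theta) \in \R^2 \setminus \{0\}$ with $\theta \in H_u$, with a single random constant $C_\omega$ independent of $\xi$. Note that no It\^o calculus is yet invoked; the argument in this regime is purely pathwise.

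First I would extract the algebraic content of the definition of $T$: in either branch of the case distinction, $X_T$ is an integer multiple of $2\pi$ positioned so that $X_1$ lies in the adjacent period, giving the sandwich $|X_1 - X_T| < 2\pi$. This is immediate from $y \leq \lceil y \rceil < y + 1$ and $y-1 < \lfloor y\rfloor \leq y$ applied to $y = u(\cos\theta + W_1\sin\theta)$, using $X_1 = -2\pi y$.

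Next, since $X_t = bt + \sigma W_t$ with deterministic time-independent coefficients $b = -2\pi u\cos\theta$ and $\sigma = -2\pi u\sin\theta$, the exact pathwise identity
$$X_1 - X_T \;=\; b(1-T) + \sigma(W_1 - W_T)$$
holds by direct subtraction. Solving for $1-T$ and applying the triangle inequality,
$$|1-T| \;\leq\; \frac{|X_1 - X_T| + |\sigma|\,|W_1 - W_T|}{|b|}.$$
I now invoke the trigonometric bounds from Lemma~\ref{lma:trigonometric} valid in the horizontal regime $\theta \in H_u$, namely $|b| \geq \sqrt{2}\pi u$ and $|\sigma| \leq 2\pi u^{1/2}$, together with $|W_1 - W_T| \leq 2\sup_{s\in[0,1]}|W_s| =: 2M_\omega$, which is almost surely finite. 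Combined with $|X_1 - X_T| < 2\pi$ this yields
$$1 - T \;\leq\; \frac{2\pi + 4\pi u^{1/2} M_\omega}{\sqrt{2}\pi u} \;\leq\; \frac{\sqrt{2}}{u} + \frac{2\sqrt{2}\,M_\omega}{u^{1/2}},$$
which is of the required form $C_\omega u^{-1/2}$ for $u \geq 1$, with $C_\omega = \sqrt{2} + 2\sqrt{2}\,M_\omega$; the remaining case of bounded $u$ is handled by the trivial bound $1-T\leq 1$ after enlarging the constant.

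The only conceptual obstacle here is recognising that the exact linear identity for $X_1 - X_T$ can be inverted: the smallness of $|X_1 - X_T| < 2\pi$ combined with the dominance of the drift $|b| \asymp u$ over the diffusion $|\sigma| \asymp u^{1/2}$ in the horizontal regime forces $1-T$ to be of order $u^{-1/2}$, with the random constant controlled by the almost surely finite path maximum $\sup_{s\in[0,1]}|W_s|$. The harder analytic work will begin when bounding the complementary piece $\int_0^T Z_t\,dt$, which is where It\^o's lemma (Lemma~\ref{lma:itocomplex}) and the Burkholder--Davis--Gundy estimates (Lemma~\ref{lma:BDG}) will be needed.
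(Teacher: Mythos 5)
Your proof is correct and follows essentially the same route as the paper: both bound $\bigl|\int_T^1 Z_t\,dt\bigr|$ by $1-T$, use the floor/ceiling definition of $T$ to get $|X_1-X_T|<2\pi$ (the paper phrases this as $|Y_T-Y_1|\leq 1$), and then combine the trigonometric bounds of Lemma \ref{lma:trigonometric} with the almost sure bound $\sup_{t\in[0,1]}|W_t|\leq M_\omega$ to conclude $1-T=O_\omega(u^{-1/2})$. Your inversion of the exact identity $X_1-X_T=b(1-T)+\sigma(W_1-W_T)$ merely replaces the paper's sign case-split on $\cos\theta$ and is a harmless algebraic variant.
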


\begin{proof}
Since $W_t$ is almost surely continuous, there almost surely exists a random constant $M_\omega > 1$ such that $W_t \in [-M_\omega,M_\omega]$ for all $t \in [0,1]$.  Define the real-valued process
$$Y_t := u (t \cos \theta + W_t \sin \theta)$$
so $X_t = -2\pi Y_t$. Suppose $X_1 \geq 0$. In this case $Y_T = \lceil Y_1 \rceil \leq 0$ and so $Y_1 + 1 \geq Y_T \geq Y_1$. Moreover, when $X_1 < 0$ we have $Y_T = \lfloor Y_1 \rfloor > 0$ and $Y_1 \geq Y_T \geq Y_1-1$. Thus no matter what is the sign of $X_1$ is, we always have almost surely
$$u (\cos \theta + W_1 \sin \theta) + 1 \geq u (T \cos \theta + W_T \sin \theta) \geq u (\cos \theta + W_1 \sin \theta) -1.$$
Therefore, in the case $\cos \theta > 0$ we obtain
$$T \geq 1 + W_1\frac{\sin\theta}{\cos \theta}  - W_T \frac{\sin \theta}{\cos \theta}- \frac{1}{u\cos \theta}$$
and when $\cos \theta < 0$ we have
$$T \geq 1 + W_1\frac{\sin\theta}{\cos \theta}  - W_T \frac{\sin \theta}{\cos \theta} + \frac{1}{u\cos \theta}.$$
Since $u \in H_u$ Lemma \ref{lma:trigonometric} together with $W_t  \in [-M_\omega,M_\omega]$ yields
$$T \geq 1 - 2\sqrt{2}M_\omega u^{-1/2} - \frac{\sqrt{2}}{u}.$$
Recalling $M_\omega > 1$ this gives
\[
\Big|\int_{T}^1 Z_t \, dt\Big|  \leq \int_{T}^1 |Z_t |\, dt = 1-T \leq 4 M_\omega u^{-1/2}
\]
as required.
\end{proof}

We now estimate the integral over $[0,T]$, which is where It\=o calculus comes into play.

\begin{lemma}
\label{lma:0Tintegral}
Fix $\eps > 0$. Almost surely there exists a random constant $C_\omega > 0$ such that for any $\k = u(\cos\theta,\sin\theta) \in \eps\Z^2 \setminus \{0\}$ with $\theta \in H_u$ we have
$$\Big|\int_{0}^{T} Z_t \, dt\Big| \leq C_\omega |\k|^{-1/2}.$$
\end{lemma}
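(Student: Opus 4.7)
The plan is to apply the complex It\^o lemma (Lemma \ref{lma:itocomplex}) with $f(x) = e^{ix}$, so as to trade the Lebesgue integral $\int_0^T Z_t\,dt = \int_0^T e^{iX_t}\,dt$ for a boundary term (which will vanish by the choice of $T$) plus a stochastic integral, and then to control that stochastic integral via the Burkholder-Davis-Gundy inequality of Lemma \ref{lma:BDG} together with a Markov and Borel-Cantelli argument across the lattice $\eps\Z^2$.

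More precisely, $X_t$ is the It\^o drift-diffusion process of \eqref{eq:Xprocess} with time-independent coefficients $b = -2\pi u\cos\theta$ and $\sigma = -2\pi u\sin\theta$. Since $f(x) = e^{ix}$ satisfies $f'(x) = ie^{ix}$ and $f''(x) = -e^{ix}$, Lemma \ref{lma:itocomplex} gives
\begin{equation*}
e^{iX_T} - 1 \ = \ \Bigl(ib - \tfrac{\sigma^2}{2}\Bigr)\int_0^T e^{iX_t}\,dt \ + \ i\sigma\int_0^T e^{iX_t}\,dW_t.
\end{equation*}
By construction $X_T \in 2\pi\Z$, so $e^{iX_T} = 1$ and the left-hand side vanishes. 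Solving for the Lebesgue integral and using $|ib - \sigma^2/2| \geq |b|$, $|i\sigma| = |\sigma|$, together with the horizontal case of Lemma \ref{lma:trigonometric}, which gives $|\sigma|/|b| = |\tan\theta| \leq \sqrt{2}\,u^{-1/2}$ for $\theta \in H_u$, I obtain
\begin{equation*}
\Bigl|\int_0^T e^{iX_t}\,dt\Bigr| \ \leq \ \sqrt{2}\,|\k|^{-1/2}\,\Bigl|\int_0^T e^{iX_t}\,dW_t\Bigr|,
\end{equation*}
so the correct power of $|\k|$ is already exposed and it remains to show that the stochastic integral is almost surely $O(1)$ uniformly in $\k \in \eps\Z^2$ with $\theta \in H_u$.

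To this end, $|e^{iX_t^\k}| \equiv 1$ bounds the quadratic variation $\int_0^1 |e^{iX_t^\k}|^2\,dt$ by $1$, so applying Lemma \ref{lma:BDG} to the real and imaginary parts of $e^{iX_t^\k}$ yields, for every integer $p \geq 1$ and with a constant $C_p$ independent of $\k$,
\begin{equation*}
\E\bigl[M_\k^{2p}\bigr] \leq C_p, \qquad \text{where} \qquad M_\k := \sup_{s \in [0,1]}\Bigl|\int_0^s e^{iX_t^\k}\,dW_t\Bigr|.
\end{equation*}
Since $T \leq 1$, the integral in the previous display is bounded by $M_\k$. To globalise in $\k$, I would use Markov's inequality: for fixed $\delta > 0$ and $p$ chosen so that $2p\delta > 2$,
\begin{equation*}
\sum_{\k \in \eps\Z^2 \setminus\{0\}}\P(M_\k > |\k|^\delta) \ \leq \ \sum_{\k}\frac{C_p}{|\k|^{2p\delta}} \ < \ \infty,
\end{equation*}
and Borel-Cantelli then yields a random constant with $M_\k \leq C_\omega\,|\k|^\delta$ for every $\k$.

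The main obstacle lies precisely in this last step: because $(M_\k)$ is only subgaussian rather than uniformly bounded, the crude Borel-Cantelli argument costs a factor $|\k|^\delta$ and therefore produces $|\widehat\mu(\k)| \leq C_\omega|\k|^{-1/2+\delta}$ for any $\delta > 0$ (or, with a sharper Gaussian concentration estimate at the scale of the lattice, $C_\omega|\k|^{-1/2}\sqrt{\log|\k|}$). To match the clean $|\k|^{-1/2}$ in the statement one would need either to exploit correlations among the martingales $\int_0^\cdot e^{iX_t^\k}\,dW_t$ as $\k$ varies (so as to upgrade the union bound to a chaining argument), or to accept a $\sqrt{\log|\k|}$ factor and absorb it into the constant $C_\omega$; the latter option is in any case consistent with the logarithmic factor appearing in the main-theorem estimate \eqref{eq:fourierdecay} and could be harmlessly carried through the lattice-to-$\R^2$ passage of Section \ref{sec:completion}.
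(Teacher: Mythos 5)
Your first half coincides with the paper's argument: apply the complex It\^o lemma with $f(x)=e^{ix}$, kill the boundary term because $X_0,X_T\in 2\pi\Z$, pull out the prefactor $|\sigma/(ib-\sigma^2/2)|\leq|\tan\theta|\leq\sqrt2\,u^{-1/2}$ via Lemma \ref{lma:trigonometric}, and control the remaining stochastic integral through Lemma \ref{lma:BDG} applied to $\cos X_t$ and $\sin X_t$, using $\sup_{s\leq 1}$ to handle the fact that $T\leq 1$ is random. (One small remark you skip: $T$ is only $\cF_1$-measurable, not a stopping time; the paper justifies evaluating the It\^o formula at $T$ by the pathwise, almost-surely-for-all-times form of Lemma \ref{lma:itocomplex}.)

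The genuine gap is in your final, globalising step, and you acknowledge it yourself: with a \emph{fixed} moment order $p$, Markov plus Borel--Cantelli over the lattice costs a factor $|\k|^\delta$ (or $\sqrt{\log|\k|}$ at best), and a factor growing in $|\k|$ cannot be ``absorbed into the constant $C_\omega$,'' so the lemma as stated is not proved. The paper's fix is to let the moment order grow with the frequency: take $p=p_\k=\lfloor\log|\k|\rfloor$ in the moment bound $\E|\int_0^T Z_t\,dt|^{2p}\leq 13\,p^{1/2}4^p|\k|^{-p}$ (Lemma \ref{lma:ourmomentbound}), and observe that
$$\E\sum_{\k\in\eps\Z^2\setminus\{0\}}|\k|^{-3}\,\frac{|I(\k)|^{2p_\k}}{13\,p_\k^{1/2}4^{p_\k}|\k|^{-p_\k}}\leq\sum_{\k}|\k|^{-3}<\infty,$$
so the summands are almost surely bounded; taking $2p_\k$-th roots, the factors $4^{p_\k}$, $13\,p_\k^{1/2}$ and the summability slack $|\k|^{3}$ all contribute only bounded constants (e.g.\ $|\k|^{3/(2p_\k)}\leq e^{3/2+o(1)}$), leaving exactly $C_\omega|\k|^{-1/2}$. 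The point you missed is that the $p$-dependence of the horizontal moment bound is only $4^p\sqrt{p}$ --- because the quadratic variation $\int_0^1\cos^2X_t\,dt$ is deterministically bounded by $1$ and the stated BDG constant is $2\sqrt{10p}$ --- so growing $p$ logarithmically in $|\k|$ is free; this is precisely what separates the horizontal case (clean $|\k|^{-1/2}$) from the vertical case, where Kahane's constant $C^pp^p$ forces the extra $\sqrt{\log|\k|}$ in Lemma \ref{lma:kahanebound}. No chaining or correlation argument across the lattice is needed.
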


To prove Lemma \ref{lma:0Tintegral}, we first need to compute the higher order moments of the random variable $\int_{0}^{T} Z_t \, dt$.

\begin{lemma}
\label{lma:ourmomentbound}
For any $p \in \N$ and $\xi = u(\cos\theta,\sin\theta) \in \R^2 \setminus \{0\}$ with $\theta \in H_u$ the $2p$th moment
$$\E \Big|\int_{0}^{T} Z_t \, dt\Big|^{2p} \leq   13 p^{1/2} \, 4^p |\xi|^{-p}.$$
\end{lemma}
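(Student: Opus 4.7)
The plan is to convert the Lebesgue integral $\int_0^T Z_t\,dt$ into a stochastic integral via It\=o's lemma, then apply the Burkholder--Davis--Gundy inequality. Applying the complex It\=o formula (Lemma \ref{lma:itocomplex}) to $f(x) = e^{ix}$, which satisfies $f'(x) = if(x)$ and $f''(x) = -f(x)$, gives almost surely
$$e^{iX_T} - 1 \;=\; \Bigl(ib - \tfrac{\sigma^2}{2}\Bigr)\int_0^T Z_t\,dt \;+\; i\sigma\int_0^T Z_t\,dW_t.$$
By the defining property of $T$, the value $X_T$ is an integer multiple of $-2\pi$, so $e^{iX_T} = 1$ and rearranging yields the key identity
$$\int_0^T Z_t\,dt \;=\; \frac{-i\sigma}{ib - \sigma^2/2}\,\int_0^T Z_t\,dW_t.$$

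Next I would bound the deterministic prefactor and the stochastic integral separately. With $b = -2\pi u\cos\theta$ and $\sigma = -2\pi u\sin\theta$, one computes $|ib - \sigma^2/2|^2 = 4\pi^2 u^2 \cos^2\theta + 4\pi^4 u^4 \sin^4\theta$. For $\theta \in H_u$, Lemma \ref{lma:trigonometric}(1) yields $|\cos\theta| \geq 1/\sqrt{2}$ and $|\sin\theta| \leq u^{-1/2}$, so the denominator is at least $\pi u\sqrt{2}$ while $|\sigma| \leq 2\pi u^{1/2}$, giving a prefactor of modulus at most $\sqrt{2}\,u^{-1/2}$ and contributing a factor $2^p u^{-p}$ after raising to the $2p$-th power. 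For the stochastic integral, writing $Z_t = \cos X_t + i\sin X_t$ gives $|\int_0^T Z_t\,dW_t|^{2p} = (A^2 + B^2)^p \leq 2^{p-1}(A^{2p} + B^{2p})$, where $A,B$ are real stochastic integrals; since $T \leq 1$ each is dominated by its supremum over $[0,1]$, and Lemma \ref{lma:BDG} applied to the bounded integrands $\cos X_t, \sin X_t$ gives $\E A^{2p}, \E B^{2p} \leq 2\sqrt{10p}$. Combining the three bounds produces
$$\E\Bigl|\int_0^T Z_t\,dt\Bigr|^{2p} \;\leq\; 2^p u^{-p} \cdot 2^{p-1} \cdot 2 \cdot 2\sqrt{10p} \;=\; 2\sqrt{10}\,p^{1/2}\,4^p\,|\xi|^{-p},$$
and since $2\sqrt{10} < 13$ the stated estimate follows.

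The delicate point worth handling carefully is the evaluation of It\=o's formula at the random time $T$: because Lemma \ref{lma:itocomplex} holds pathwise for every $T \geq 0$ on a full-probability event, one may simply plug in the bounded stopping time $T \in [0,1]$. A secondary issue is that the coefficient $ib - \sigma^2/2$ must be nonzero to allow division, which is ensured by $|b| = 2\pi u|\cos\theta| \geq \sqrt{2}\,\pi u > 0$ for $\theta \in H_u$.
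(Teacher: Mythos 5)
Your proposal is correct and follows essentially the same route as the paper: apply the complex It\=o formula to $f(x)=e^{ix}$, use $f(X_T)=f(X_0)=1$ to solve for $\int_0^T Z_t\,dt$ in terms of the stochastic integral, bound the prefactor via Lemma \ref{lma:trigonometric}, and bound the real and imaginary parts of the stochastic integral by the Burkholder--Davis--Gundy inequality (your $2^{p-1}$ convexity constant even yields the slightly better factor $2\sqrt{10}$ in place of the paper's $4\sqrt{10}$). One small correction: $T$ is \emph{not} a stopping time, since it is defined through $X_1$ and is only $\cF_1$-measurable, but this does not affect your argument because the justification you actually invoke --- that It\=o's lemma holds pathwise for all times on a full-probability event, so any a.s.\ finite random time may be substituted --- is precisely the point the paper makes.
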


\begin{proof}
Recall that
$$X_t = -2\pi u (t \cos \theta + W_t \sin \theta)$$ 
is an It\=o drift-diffusion process satisfying the stochastic differential equation
$$dX_t = b \,dt + \sigma \,dW_t$$
for deterministic and time independent coefficients $b = -2\pi u \cos \theta$ and $\sigma = -2\pi u \sin \theta$. Writing 
$$f(x) := \exp(ix), \quad x \in \R,$$ 
then $Z_t = f(X_t)$, $f'(x) = i\exp(ix)$ and $f''(x) = -\exp(ix)$. Thus by complex It\=o's lemma (see Lemma \ref{lma:itocomplex}) we have $\P$ almost surely
\begin{align}\label{inteq}f(X_T) - f(X_0) = (b i - \sigma^2/2) \int_0^T f(X_t) \, dt  + \sigma i \int_0^T f(X_t) \,dW_t.\end{align}
Note that $T_\omega \leq 1$ is random and only $\cF_1$ measurable, thus it is not a stopping time. However, as Lemma \ref{lma:itocomplex} is given \textit{pathwise}, that is, $\P$ almost surely It\=o's lemma holds for any time $T \geq 0$ then as $T_\omega$ is $\P$ almost surely well-defined, we have \eqref{inteq} almost surely. Since $X_0$ and $X_T$ are $2\pi$ multiples of integers by definition, we have $f(X_T) = f(X_0) = 1$.  Thus \eqref{inteq} gives
$$ \int_0^T f(X_t) \, dt = -\frac{\sigma i}{b i - \sigma^2/2} \int_0^T f(X_t) \,dW_t$$
Since $b$ and $\sigma$ are deterministic, this yields that the $2p$th moment
$$\E\Big|\int_0^T f(X_t) \,dt\Big|^{2p} = \Big|\frac{\sigma i}{b i - \sigma^2/2}\Big|^{2p}\E \Big| \int_0^T f(X_t) \,dW_t\Big|^{2p}.$$
Applying the  Burkholder–Davis–Gundy inequality (see Lemma \ref{lma:BDG}) for the process $\cos X_t$ gives
\begin{align*}\E\Big[\Big| \int_0^T \cos X_t \,dW_t\Big|^{2p}\Big] & \leq \E\Big[\Big( \sup_{0 \leq s \leq 1}\Big| \int_0^s \cos X_t \,dW_t\Big|\Big)^{2p}\Big]\\
& \leq 2 \sqrt{10 p} \ \E \Big( \int_0^1 \cos^2 X_t \,dt\Big)^{p} \\
&\leq 2 \sqrt{10 p}
\end{align*}
since $\cos^2 \leq 1$. Similar application for the process $\sin X_t$ gives
$$\E\Big[\Big| \int_0^T \sin X_t \,dW_t\Big|^{2p}\Big] \leq 2 \sqrt{10 p}.$$
By Euler's formula, we can write $f(X_t) = \cos X_t + i\sin X_t$ and so
$$\int_0^T f(X_t) \,dW_t = \int_0^T \cos X_t \,dW_t + i\int_0^T \sin X_t \,dW_t.$$
Hence
\begin{align*}\E\Big| \int_0^T f(X_t) \,dW_t\Big|^{2p} &=  \E\Big[\Big( \Big|\int_0^T \cos X_t \,dW_t\Big|^2 + \Big|\int_0^T \sin X_t \,dW_t\Big|^2\Big)^{p}\Big]\\
& \leq \E\Big[ 2^p\Big|\int_0^T \cos X_t \,dW_t\Big|^{2p}+ 2^p\Big|\int_0^T \sin X_t \,dW_t\Big|^{2p}\Big]\\
& = 2^p\Big( \E\Big[\Big| \int_0^T \cos X_t \,dW_t\Big|^{2p}\Big] + \E\Big[\Big| \int_0^T \sin X_t \,dW_t\Big|^{2p}\Big]\Big) \\
& \leq 2^p 4 \sqrt{10 p}.
\end{align*}
Moreover, as $\theta \in H_u$ we have by Lemma \ref{lma:trigonometric} that $\cos^2\theta \geq 1/2$ and $\sin^2 \theta \leq u^{-1}$. Hence
$$\Big|\frac{\sigma i}{b i - \sigma^2/2}\Big|^2 = \frac{\sigma^2}{b^2 + \sigma^4/4} \leq \frac{\sigma^2}{b^2} = \frac{4\pi^2 u^2 \sin^2 \theta}{4\pi^2 u^2 \cos^2 \theta} = \frac{\sin^2\theta}{\cos^2\theta} \leq 2 u^{-1}.$$
Therefore,
$$\E\Big|\int_0^T f(X_t) \,dt\Big|^{2p} \leq  4 \sqrt{10 p} \, 4^p \theta^{2p} \leq  13 p^{1/2} \, 4^p u^{-p}.$$
as required. 
\end{proof}

\begin{proof}[Proof of Lemma \ref{lma:0Tintegral}]
Fix $\eps >0$. Then for all $\k \in \eps\Z^2 \setminus \{0\}$ define the random variable
$$I(\k) := \Big(\int_{0}^{T} Z_t \, dt\Big) \cdot \chi_A(\k),$$
where $\chi_A$ is the indicator function on the set
$$A := \{\xi = u(\cos \theta,\sin\theta) \in \R^2 \setminus \{0\} : \theta \in H_u\}.$$ 
Note that $I(\k)$ is well-defined and finite since $| \int_{0}^{T} Z_t \, dt | \leq 1$ by $|\exp(ix)| = 1$ property. Lemma \ref{lma:ourmomentbound} now yields for any $\k \in \eps\Z^2 \setminus \{0\}$ and $p \in \N$ that
$$\E|I(\k)|^{2p} \leq 13 p^{1/2} \, 4^p |\k|^{-p}$$
as when $\k \notin A$ we have $I(\k) \equiv 0$. Write $p_\k = \lfloor \log |\k|\rfloor$. Then
$$\E \sum_{\k \in \eps\Z^2 \setminus \{0\}} |\k|^{-3} \frac{|I(\k)|^{2{p_\k}}}{13p_\k^{1/2}\, 4^{p_\k} |\k|^{-{p_\k}}} \leq \sum_{\k \in \eps\Z^2 \setminus \{0\}} |\k|^{-3} < \infty.$$
This means that the summands tend to $0$ almost surely as $|\k| \to \infty$ and so we can find a random constant $C_\omega > 0$ such that for all $\k \in \eps\Z^2 \setminus \{0\}$ we have
$$|\k|^{-3} \frac{|I(\k)|^{2{p_\k}}}{13p_\k^{1/2} \, 4^{p_\k} |\k|^{-{p_\k}}} \leq C_\omega.$$
Therefore, by possibly making $C_\omega$ bigger we obtain
$$|I(\k)| \leq C_\omega |\k|^{-1/2}.$$
This holds for each $\k \in \eps\Z^2 \setminus \{0\}$ so by definition of $I(\k)$ we have whenever $\k = u(\cos\theta,\sin\theta) \in \eps\Z^2 \setminus\{0\}$ with $\theta \in H_u$ that
$$\Big|\int_{0}^{T} Z_t \, dt\Big| \leq C_\omega u^{-1/2}$$
as claimed.
\end{proof}

We are now in position to complete the proof of Lemma \ref{lma:ourbound}.

\begin{proof}[Proof of Lemma \ref{lma:ourbound}]
Fix $\varepsilon>0$.  By the splitting
$$\widehat{\mu}(\xi) = \int_0^1 Z_t \, dt   =   \int_{0}^{T} Z_t \, dt   +   \int_{T}^1 Z_t \, dt$$
and Lemmas \ref{lma:T1integral} and \ref{lma:0Tintegral}, we have that almost surely there exists a constant $C_\omega > 0$ such that for all $\k = u(\cos \theta,\sin\theta) \in \eps\Z^2 \setminus \{0\}$ with $\theta \in H_u$ we have
$$|\widehat{\mu}(\k)| \leq \  \Big|\int_{0}^{T} Z_t \, dt\Big|  \ + \  \Big|\int_{T}^1 Z_t \, dt\Big| \leq C_\omega |\k|^{-1/2}$$
as required.
\end{proof}

\subsection{Vertical angles}
\label{sec:vertical}

In this section we apply Kahane's work to obtain Fourier decay estimates when $\theta \in V_u$.

\begin{lemma}\label{lma:kahanebound} Fix $\eps > 0$. Almost surely there exists a random constant $C_\omega > 0$ such that for any $\k = u(\cos \theta,\sin\theta) \in \eps\Z^2 \setminus \{0\}$ with $\theta \in V_u$ we have
$$|\widehat{\mu}(\k)| \leq C_\omega |\k|^{-1/2} \sqrt{\log|\k|}.$$
\end{lemma}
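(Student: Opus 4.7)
My plan is to adapt Kahane's moment computation for the Brownian image measure \cite{Ka1} to our graph measure $\mu$. The key observation is that the extra factor $\exp(-2\pi i u t\cos\theta)$ in the integrand defining $\widehat\mu(\xi)$ is unimodular in $t$, hence drops out after taking moduli inside the moment integral. Expanding the $2p$-th moment over $[0,1]^{2p}$ with the sign pattern $\eta_j = (-1)^{j+1}$ coming from alternating copies of $\widehat\mu(\xi)$ and $\overline{\widehat\mu(\xi)}$, and evaluating the inner expectation as a centred Gaussian characteristic function, one obtains
\[
\E\big|\widehat\mu(\xi)\big|^{2p} \;\leq\; \int_{[0,1]^{2p}} \exp\Big(-2\pi^2 u^2 \sin^2\theta \cdot V_p(\mathbf{t})\Big)\, d\mathbf{t},
\]
where $V_p(\mathbf{t}) = \mathrm{Var}\big(\sum_j \eta_j W_{t_j}\big)$ is the variance appearing in Kahane's calculation. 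This is Kahane's integral with $u$ replaced by $u|\sin\theta|$; his combinatorial reduction to ordered times (where $V_p$ becomes a sum of squared consecutive gaps weighted by squared tail sums of the $\eta_j$'s) then yields an absolute $C>0$ with
\[
\E\big|\widehat\mu(\xi)\big|^{2p} \;\leq\; C^p\, p!\, \big(u|\sin\theta|\big)^{-2p}.
\]

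For $\theta \in V_u$, Lemma \ref{lma:trigonometric}(2) gives $|\sin\theta|^{-1} \leq \max\{\tfrac{\pi}{2}u^{1/2},\sqrt{2}\}$, so once $u$ exceeds an absolute constant the right-hand side simplifies to $C_1^p\, p!\, u^{-p}$ for some new $C_1 > 0$; the small-$u$ regime is harmless since $|\widehat\mu(\xi)|\le 1$ trivially and can be absorbed into the final random constant.

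To pass to the almost sure pointwise bound on the $\eps$-grid, I would run the Borel--Cantelli/monotone convergence argument used in Lemma \ref{lma:0Tintegral}. Fix $\eps>0$, let $J(\k) := \widehat\mu(\k)\,\chi_B(\k)$ where $B := \{u(\cos\theta,\sin\theta) \in \R^2\setminus\{0\} : \theta\in V_u\}$, and set $p_\k := \lceil\log|\k|\rceil$. Combining the moment bound with $\sum_{\k\in\eps\Z^2\setminus\{0\}} |\k|^{-3} < \infty$ yields
\[
\E\sum_{\k\in\eps\Z^2\setminus\{0\}} |\k|^{-3}\, \frac{|J(\k)|^{2p_\k}}{C_1^{p_\k}\, p_\k!\, |\k|^{-p_\k}} \;<\; \infty,
\]
so almost surely the summand is bounded by some random $C_\omega$. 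Taking the $(2p_\k)$-th root and using the Stirling asymptotics $(p_\k!)^{1/(2p_\k)} \sim \sqrt{p_\k/e}$ together with $|\k|^{3/(2p_\k)} \to e^{3/2}$ produces the required bound $|\widehat\mu(\k)| \leq C_\omega |\k|^{-1/2}\sqrt{\log|\k|}$. The main obstacle is the Kahane-type moment bound itself: the identity expressing $V_p$ via consecutive-gap squares on the ordered simplex, together with the matching of $(2p)!$ sign permutations against the $p!$ growth of the integral, is the delicate technical core of \cite{Ka1}, but it transfers verbatim to our setting since the angle $\theta$ enters only through the rescaling $u \mapsto u|\sin\theta|$.
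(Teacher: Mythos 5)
Your proposal is correct and follows essentially the same route as the paper: expand the $2p$-th moment, use Fubini and the Gaussian characteristic function so the deterministic drift phase drops out by unimodularity, recognise the result as Kahane's image-measure moment bound at frequency $u\sin\theta$, apply the vertical-angle estimate $|\sin\theta|\gtrsim u^{-1/2}$, and finish with the weighted-sum/grid argument with $p_\k\approx\log|\k|$. The only differences (bounding by $C^p p!$ rather than $C^p p^p$, and your explicit handling of small $|\k|$) are immaterial.
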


Let us discuss a few estimates Kahane obtained in \cite{Ka1}. Let $\nu$ be the push-forward of Lebesgue measure on $[0,1]$ under the map $t \mapsto W_t$, that is, $\nu$ is the Brownian image of Lebesgue measure. Kahane established the following:

\begin{theorem}[Kahane, page 255, \cite{Ka1}]\label{thm:kahane} Almost surely
$$|\widehat{\nu}(v)|  \leq O(|v|^{-1} \sqrt{\log|v|}) \quad \text{as } |v| \to \infty.$$
\end{theorem}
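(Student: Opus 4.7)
The plan is to adapt the It\=o-calculus approach of Section \ref{sec:horizontal} to the pure-diffusion regime in which the drift vanishes. Observe that $\widehat{\nu}(v) = \int_0^1 e^{iX_t}\,dt$ where $X_t := -2\pi v W_t$ is an It\=o drift-diffusion process with zero drift $b=0$ and constant diffusion coefficient $\sigma = -2\pi v$. Applying the complex It\=o lemma (Lemma \ref{lma:itocomplex}) to $f(x) = e^{ix}$, for which $f'(x) = ie^{ix}$ and $f''(x) = -e^{ix}$, yields
$$e^{-2\pi i v W_1} - 1 \ = \ -2\pi^2 v^2\, \widehat{\nu}(v) \ - \ 2\pi i v \int_0^1 e^{-2\pi i v W_t}\,dW_t,$$
which rearranges to the exact identity
$$\widehat{\nu}(v) \ = \ \frac{1 - e^{-2\pi i v W_1}}{2\pi^2 v^2} \ - \ \frac{i}{\pi v}\int_0^1 e^{-2\pi i v W_t}\,dW_t.$$

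Next, I would control the $2p$-th moment of the stochastic integral on the right using the Burkholder-Davis-Gundy inequality (Lemma \ref{lma:BDG}). Splitting the integrand into real and imaginary parts exactly as in the proof of Lemma \ref{lma:ourmomentbound} and using that $|e^{-2\pi iv W_t}| \equiv 1$, one obtains
$$\E\Big|\int_0^1 e^{-2\pi i v W_t}\,dW_t\Big|^{2p} \ \leq \ C\sqrt{p}\, 2^p$$
for an absolute constant $C$. Since the deterministic summand is trivially bounded by $1/(\pi^2 v^2)$ in modulus, this produces a moment estimate of the form $\E|\widehat{\nu}(v)|^{2p} \leq C'\sqrt{p}\,(D/v^2)^p$, of exactly the same shape as Lemma \ref{lma:ourmomentbound}.

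The final step is a Chebyshev plus Borel-Cantelli argument. For $n \in \Z\setminus\{0\}$ with $|n|$ large, set $p_n := \lfloor \log|n| \rfloor$ and $\lambda_n := A|n|^{-1}\sqrt{\log|n|}$ for a constant $A$ to be chosen. Chebyshev gives a bound $\P(|\widehat{\nu}(n)| > \lambda_n) \leq C'\sqrt{p_n}(D/(A^2\log|n|))^{p_n}$, which decays super-polynomially in $|n|$ once $A$ is chosen large enough, and is thus summable over $n \in \Z$. Borel-Cantelli then produces a random $C_\omega$ with $|\widehat{\nu}(n)| \leq C_\omega|n|^{-1}\sqrt{\log|n|}$ for all large $|n| \in \Z$. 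The main obstacle is extending the estimate from the integer grid to arbitrary real $v$, since the naive continuity bound $|\widehat{\nu}(v)-\widehat{\nu}(v')| \leq 2\pi|v-v'|\sup_{t \in [0,1]}|W_t|$ has an almost surely finite but not vanishing Lipschitz constant, while the target decay $|v|^{-1}\sqrt{\log|v|}$ vanishes. I would resolve this by repeating the Chebyshev argument on a finer, $|v|$-dependent dyadic grid, say of spacing $\sim 2^{-k}$ on the annulus $|v| \in [2^k,2^{k+1})$; the super-polynomial Chebyshev tail easily survives the $\sim 2^{2k}$ grid points per annulus, and the Lipschitz defect $2\pi \cdot 2^{-k}\sup_t|W_t|$ is then comparable to the target bound. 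This is essentially Kahane's interpolation device from \cite{Ka1} and is the same completion technique the authors refer to in the passage preceding the statement.
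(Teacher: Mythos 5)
Your proposal is correct in outline but it is a genuinely different route from the paper's: the paper does not prove Theorem \ref{thm:kahane} at all, it imports it from Kahane's book, where the proof rests on the explicit Gaussian computation recorded here as Lemma \ref{lma:kahaneimagemoment} (write $\E|\widehat{\nu}(v)|^{2p}$ as a $2p$-fold integral, evaluate the characteristic function of the Gaussian $\sum_k (W_{t_k}-W_{s_k})$, and bound the resulting integral by $C^p p^p |v|^{-2p}$), followed by a Chebyshev/Borel--Cantelli step and a lattice-to-continuum comparison in the spirit of Lemma \ref{lma:comparison}. You instead push the It\=o/Burkholder--Davis--Gundy machinery of Section \ref{sec:horizontal} into the zero-drift regime: with $b=0$ the prefactor $|\sigma i/(bi-\sigma^2/2)| = 2/|\sigma| = (\pi|v|)^{-1}$ already carries the full decay, no random time $T$ is needed (you may take $T=1$, the boundary term being $O(|v|^{-2})$ after division), and your exact identity from Lemma \ref{lma:itocomplex} is correct. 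This is an attractive unification—it recovers Kahane's moment estimate from stochastic calculus rather than from the Gaussian characteristic function—and your dyadic-grid-plus-Lipschitz interpolation is a legitimate substitute for Lemma \ref{lma:comparison}.

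One quantitative step needs repair. The claimed bound $\E\bigl|\int_0^1 e^{-2\pi i v W_t}\,dW_t\bigr|^{2p} \le C\sqrt{p}\,2^p$ cannot hold: this stochastic integral is a time-changed Brownian motion run for a time of order one, so its $2p$-th moments grow like $p^p$. Indeed, already for the integrand $X_t \equiv 1$ the left-hand side of Lemma \ref{lma:BDG} is $\E\sup_{s\le 1}|W_s|^{2p} \ge (2p-1)!!$, which exceeds $2\sqrt{10p}$ once $p\ge 3$; Peskir's result is an $L^{2p}$-norm inequality, so the constant $2\sqrt{10p}$ must be raised to the power $2p$ when stated for moments. (You inherited this from the paper's own statement of Lemma \ref{lma:BDG}, which Lemma \ref{lma:ourmomentbound} uses in the same way, so the slip is understandable.) Fortunately nothing essential is lost: with the corrected constant your estimate becomes $\E|\widehat{\nu}(v)|^{2p} \le (Cp)^p |v|^{-2p}$, i.e.\ exactly Kahane's Lemma \ref{lma:kahaneimagemoment}, and your Chebyshev computation with $p=\lfloor \log|v|\rfloor$ and threshold $A|v|^{-1}\sqrt{\log|v|}$ then gives a tail of size $(C/A^2)^{p}$, which is $O(|v|^{-3})$ for $A$ large; this survives the union bound over the $\sim 2^{2k}$ points per dyadic annulus, and Borel--Cantelli together with your Lipschitz defect $2\pi\,2^{-k}\sup_t|W_t|$ finishes the proof with the stated $\sqrt{\log|v|}$ factor. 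As written, your $\sqrt{p}\,2^p$ bound would even yield almost sure decay $O(|v|^{-1})$ with no logarithm, which should itself have flagged that the constant had been misread.
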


The key ingredient for the proof of Theorem \ref{thm:kahane} was based on establishing the following bound for the higher moments:

\begin{lemma}[Kahane, page 254, \cite{Ka1}, estimate (2)]\label{lma:kahaneimagemoment} There exists a constant $C > 0$ such that for any $v \in \R \setminus \{0\}$ and any $p \in \N$ we have
$$\E|\widehat{\nu}(v)|^{2p} \leq C^p p^p |v|^{-2p}.$$
\end{lemma}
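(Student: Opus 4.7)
The plan is to give an It\=o calculus based proof in the spirit of Section \ref{sec:horizontal}, which streamlines Kahane's original Gaussian computation and in fact yields a stronger bound. Observe that $X_t := -2\pi v W_t$ is an It\=o drift-diffusion process with vanishing drift coefficient $b = 0$ and diffusion coefficient $\sigma = -2\pi v$. Applying the complex It\=o lemma (Lemma \ref{lma:itocomplex}) to $f(x) = e^{ix}$, which satisfies $f'(x) = i f(x)$ and $f''(x) = -f(x)$, one obtains almost surely
$$e^{-2\pi i v W_1} - 1 = -2\pi^2 v^2 \int_0^1 e^{-2\pi i v W_t} \, dt - 2\pi i v \int_0^1 e^{-2\pi i v W_t} \, dW_t.$$

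Unlike in Lemma \ref{lma:ourmomentbound}, where a random time $T$ had to be introduced to kill the boundary term, here we may simply solve for the Fourier transform:
$$\widehat{\nu}(v) = \frac{1 - e^{-2\pi i v W_1}}{2\pi^2 v^2} - \frac{i}{\pi v} \int_0^1 e^{-2\pi i v W_t} \, dW_t.$$
The boundary piece is deterministically bounded by $(\pi v)^{-2}$ since $|e^{-2\pi i v W_1}| = 1$. For the stochastic integral, write $e^{-2\pi i v W_t} = \cos(2\pi v W_t) - i \sin(2\pi v W_t)$ and apply the Burkholder-Davis-Gundy inequality (Lemma \ref{lma:BDG}) to each real-valued integrand separately. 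Since $\cos^2, \sin^2 \leq 1$ pointwise, each quadratic variation is at most $1$, and therefore
$$\E\Big|\int_0^1 e^{-2\pi i v W_t} \, dW_t\Big|^{2p} \leq 2^{p-1}\Big(\E\Big|\int_0^1 \cos(2\pi v W_t)\, dW_t\Big|^{2p} + \E\Big|\int_0^1 \sin(2\pi v W_t)\, dW_t\Big|^{2p}\Big) \leq 2^{p+1}\sqrt{10p}.$$

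Combining both contributions via $(a+b)^{2p} \leq 2^{2p}(a^{2p} + b^{2p})$ then yields
$$\E |\widehat{\nu}(v)|^{2p} \leq \frac{2^{2p}}{\pi^{4p} v^{4p}} + \frac{2^{3p+1}\sqrt{10p}}{\pi^{2p} v^{2p}},$$
and for $|v| \geq 1$ this is bounded by $C^p \sqrt{p} \, |v|^{-2p}$ for some absolute constant $C > 0$; the range $|v| < 1$ is absorbed trivially into the constant via $|\widehat{\nu}(v)| \leq 1$. The resulting estimate is actually \emph{strictly stronger} than the statement with $p^p$, reflecting the gain from bypassing Kahane's direct Gaussian expansion over the simplex of ordered times. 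The only technical point is careful bookkeeping of the constants in the complex-to-real splitting and the application of BDG, but this is entirely parallel to the moment computation in the proof of Lemma \ref{lma:ourmomentbound} and no new obstacle is expected.
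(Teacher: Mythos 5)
Your It\=o decomposition of $\widehat{\nu}(v)$ is correct and is a genuinely different route from the paper's source for this lemma (the paper simply quotes Kahane, whose argument is the direct Gaussian moment computation over $[0,1]^{2p}$, in the spirit of the proof of Lemma \ref{lma:kahaneprojectionsmoment}). However, your headline claim --- a bound $C^p\sqrt{p}\,|v|^{-2p}$, ``strictly stronger'' than $C^pp^p|v|^{-2p}$ --- is false, so there is a genuine gap. Indeed, $\bigl(\E|\widehat{\nu}(v)|^{2p}\bigr)^{1/(2p)}$ increases to the essential supremum of $|\widehat{\nu}(v)|$ as $p\to\infty$, and on the positive-probability event $\{\sup_{t\in[0,1]}|W_t|\le (8|v|)^{-1}\}$ one has $\cos(2\pi vW_t)\ge \tfrac{1}{\sqrt2}$ for all $t\in[0,1]$, hence $|\widehat{\nu}(v)|\ge \tfrac{1}{\sqrt2}$. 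Thus $\liminf_{p\to\infty}\bigl(\E|\widehat{\nu}(v)|^{2p}\bigr)^{1/(2p)}\ge \tfrac{1}{\sqrt2}$ for every fixed $v\neq0$, whereas your bound would force this limit to be at most $\sqrt{C}/|v|$, a contradiction once $|v|>\sqrt{2C}$. (Kahane's $p^p$ passes this test, since $(C^pp^p|v|^{-2p})^{1/(2p)}=\sqrt{Cp}/|v|\to\infty$.)

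The error is in the application of Lemma \ref{lma:BDG}: with the factor $2\sqrt{10p}$ multiplying the $2p$th \emph{moment}, that inequality cannot be literally correct --- taking $X_t\equiv1$ gives left-hand side $\E[\sup_{0\le s\le1}|W_s|^{2p}]\ge\E|W_1|^{2p}=(2p-1)!!\sim\sqrt2\,(2p/e)^p$, which exceeds $2\sqrt{10p}$ already for $p=3$. Peskir's constant $2\sqrt{10p}$ is the constant in the $L^{2p}$-\emph{norm} form of the inequality; in moment form the constant is $(2\sqrt{10p})^{2p}\approx(40p)^p$, and this order is forced by the Gaussian example, so it cannot be improved. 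Consequently your estimate $\E\bigl|\int_0^1 e^{-2\pi ivW_t}\,dW_t\bigr|^{2p}\le 2^{p+1}\sqrt{10p}$ fails; the correct order is $C^pp^p$. (The same correction affects the constant in Lemma \ref{lma:ourmomentbound}, where it is harmless for the main theorem, only introducing a $\sqrt{\log}$ factor.) The good news is that your argument survives the correction: using $\E\bigl|\int_0^1e^{-2\pi ivW_t}\,dW_t\bigr|^{2p}\le C^pp^p$, which does follow from BDG with the corrected constant since the quadratic variation is at most $1$, your identity
\begin{equation*}
\widehat{\nu}(v)=\frac{1-e^{-2\pi ivW_1}}{2\pi^2v^2}-\frac{i}{\pi v}\int_0^1e^{-2\pi ivW_t}\,dW_t
\end{equation*}
yields exactly $\E|\widehat{\nu}(v)|^{2p}\le C^pp^p|v|^{-2p}$ for $|v|\ge1$, with the range $|v|<1$ trivial as you note. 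So the It\=o route does give an attractive alternative proof of the lemma as stated, but no improvement of it.
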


We can use Lemma \ref{lma:kahaneimagemoment} to give a bound on the higher moments in our setting, but with the price that the exponent will increase from $-2p$ to $-p$.

\begin{lemma} \label{lma:kahaneprojectionsmoment}
There exists a constant $C > 0$ such that for any $p \in \N$ and $\xi = u(\cos\theta,\sin\theta) \in \R^2 \setminus \{0\}$ with $\theta \in V_u$ the $2p$th moment satisfies
$$\E|\widehat{\mu}(\xi)|^{2p} \leq C^p p^p |\xi|^{-p}.$$
\end{lemma}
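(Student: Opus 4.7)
The plan is to reduce the moment estimate for the graph measure to Kahane's moment estimate for the Brownian image measure (Lemma \ref{lma:kahaneimagemoment}), exploiting the fact that on vertical angles $\theta \in V_u$ the horizontal drift factor $e^{-2\pi i u \cos\theta \cdot t}$ contributes a deterministic phase of modulus $1$ that can be discarded when bounding absolute values, while the ``diffusion parameter'' $v = u\sin\theta$ is comparable to $\sqrt{|\xi|}$ by Lemma \ref{lma:trigonometric}(2).

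First I would write
\begin{equation*}
|\widehat{\mu}(\xi)|^{2p} \;=\; \widehat{\mu}(\xi)^p\,\overline{\widehat{\mu}(\xi)}^p \;=\; \int_{[0,1]^{2p}} \exp\!\Big(-2\pi i u\cos\theta \sum_{j=1}^p(t_j-s_j)\Big)\,\exp\!\Big(-2\pi i u\sin\theta \sum_{j=1}^p(W_{t_j}-W_{s_j})\Big)\, d\mathbf{t}\,d\mathbf{s},
\end{equation*}
and apply Fubini after taking expectation. Since $\sum_j(W_{t_j}-W_{s_j})$ is Gaussian, its characteristic function
$\E\exp\big(-2\pi i u\sin\theta \sum_j(W_{t_j}-W_{s_j})\big) = \exp\!\big(-2\pi^2 u^2\sin^2\theta\, V(\mathbf{t},\mathbf{s})\big)$
is real and non-negative (here $V$ is the variance of the Gaussian sum). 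Therefore
\begin{equation*}
\E |\widehat{\mu}(\xi)|^{2p} \;=\; \int_{[0,1]^{2p}} e^{-2\pi i u\cos\theta \sum_j (t_j-s_j)}\,e^{-2\pi^2 u^2\sin^2\theta\, V(\mathbf{t},\mathbf{s})}\, d\mathbf{t}\,d\mathbf{s}.
\end{equation*}

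Next, I would take absolute values (the drift factor has modulus one and the Gaussian factor is positive), yielding the key comparison
\begin{equation*}
\E |\widehat{\mu}(\xi)|^{2p} \;\leq\; \int_{[0,1]^{2p}} e^{-2\pi^2 (u\sin\theta)^2 V(\mathbf{t},\mathbf{s})}\, d\mathbf{t}\,d\mathbf{s} \;=\; \E |\widehat{\nu}(u\sin\theta)|^{2p},
\end{equation*}
where $\nu$ is the Brownian image of Lebesgue measure as in Lemma \ref{lma:kahaneimagemoment}. Applying Kahane's bound gives $\E|\widehat{\mu}(\xi)|^{2p} \leq C_0^p p^p |u\sin\theta|^{-2p}$ for some absolute constant $C_0>0$.

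Finally, I would convert this into the desired $|\xi|^{-p}$ decay by writing $|u\sin\theta|^{-2p} = u^{-p}\,(u\sin^2\theta)^{-p}$ and using Lemma \ref{lma:trigonometric}(2). For $u\geq 8/\pi^2$ the lemma gives $|\sin\theta|\geq \tfrac{2}{\pi} u^{-1/2}$, hence $u\sin^2\theta \geq 4/\pi^2$ and the bound becomes $(C_0\pi^2/4)^p p^p |\xi|^{-p}$. For the remaining small range $u < 8/\pi^2$, the target quantity $C^p p^p |\xi|^{-p}$ is already $\geq 1$ for $C$ large enough, so the trivial bound $|\widehat{\mu}(\xi)|\leq 1$ suffices. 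Choosing $C = \max\{C_0\pi^2/4,\,1\}$ then gives the claim uniformly. The only real subtlety is the comparison step: recognizing that when computing the $2p$-th moment, the drift only supplies a unit-modulus phase, so all the cancellation available in the genuine drift-diffusion process $X_t$ is thrown away and the moments are controlled purely by the diffusion part — this is why the exponent drops from $-2p$ (as in Kahane) to $-p$, reflecting the genuine loss incurred when $\sin\theta$ is only $\gtrsim u^{-1/2}$ rather than order one.
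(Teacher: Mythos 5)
Your argument is correct and is essentially the paper's own proof: you expand the $2p$th moment as a $2p$-fold integral, apply Fubini and the Gaussian characteristic function, discard the unit-modulus drift phase, recognise the remaining integral as $\E|\widehat{\nu}(u\sin\theta)|^{2p}$, invoke Kahane's moment bound, and finish with Lemma \ref{lma:trigonometric}(2). The only (harmless) difference is bookkeeping at the end: you split according to $u \geq 8/\pi^2$ and treat small $u$ by the trivial bound $|\widehat{\mu}(\xi)|\leq 1$, whereas the paper splits according to which term realises the minimum $\min\{\tfrac{2}{\pi}u^{-1/2},\tfrac{1}{\sqrt{2}}\}$; your handling of the small-$u$ range is in fact slightly more careful.
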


\begin{proof}
Write $\t = (t_1,\dots,t_p)\in [0,1]^p$ and $d \t$ as the Lebesgue measure on $[0,1]^p$. Given $\t,\s \in [0,1]^p$, we denote
$$\phi(\t,\s) := \sum_{k = 1}^p (t_k - s_k), \quad \psi(\t,\s) := \sum_{k = 1}^p (W_{t_k} - W_{s_k}), \quad \text{and} \quad \Psi(\t,\s) := \E|\phi(\t,\s)|^2.$$
By the definition of $\mu_\theta$, $\mu$ and the Fourier-transform, and using the fact that the multivariate process 
$$X(\t,\s) := -2\pi \cos(\theta) \phi(\t,\s) -2\pi\sin(\theta)\psi(\t,\s)$$ 
is Gaussian with mean $-2\pi \cos(\theta)\phi(\t,\s)$ and variance $4\pi^2 \sin^2 (\theta) \Psi(\t,\s)$, we have through Fubini's theorem and the formula for the characteristic function that
\begin{align*}
\E|\widehat{\mu}(\xi)|^{2p} &= \E \int_{[0,1]^p}\int_{[0,1]^p} \exp(-2\pi i u (\cos(\theta) \phi(\t,\s)+\sin(\theta)\psi(\t,\s))) \, d\t\,d\s\\
&=  \int_{[0,1]^p}\int_{[0,1]^p} \E \exp( i u X(\t,\s)) \, d\t\,d\s\\
& = \int_{[0,1]^p} \int_{[0,1]^p} \exp(-2\pi i \cos(\theta)u \phi(\t,\s)-2\pi^2 |u \sin(\theta)|^2\Psi(\t,\s)) \, d\t\,d\s .
\end{align*}
Thus by taking absolute values inside the integrals, and observing that $|\exp(ix)| = 1$ for any $x \in \mathbb{R}$, we obtain
\begin{align}\E|\widehat{\mu}(\xi)|^{2p} \leq \int_{[0,1]^p} \int_{[0,1]^p} \exp(-2\pi^2 |u \sin(\theta)|^2\Psi(\t,\s)) \, d\t\,d\s. \label{eq:integral} 
\end{align}
On the other hand, by doing the expansion again for the Fourier transform $\widehat{\nu}$ of the image measure $\nu$ at $v := u \sin(\theta) \in \R \setminus \{0\}$ we see that
$$ \E|\widehat{\nu}(v)|^{2p} = \E\int_{[0,1]^p} \int_{[0,1]^p} \exp(-2\pi i v \psi(\t,\s)) \, d\t\,d\s = \int_{[0,1]^p} \int_{[0,1]^p} \exp(-2\pi^2 v^2\Psi(\t,\s)) \, d\t\,d\s,$$
which equals to \eqref{eq:integral}. Thus by Lemma \ref{lma:kahaneimagemoment} we have
\begin{align*}\E|\widehat{\mu}(\xi)|^{2p}  \leq C^p p^p |v|^{-2p}.\end{align*}
Since $\theta \in V_u$ we have $|\sin \theta| \geq \min\{\tfrac{2}{\pi}u^{-1/2}, \tfrac{1}{\sqrt{2}}\}$. When $|\sin \theta| \geq \tfrac{1}{\sqrt{2}}$ we obtain
$$C^p p^p |v|^{-2p} \leq (2C)^p p^p u^{-2p} \leq  (2C)^p p^p u^{-p}.$$
On the other hand, if $|\sin \theta| \geq \tfrac{2}{\pi}u^{-1/2}$ we have
$$C^p p^p |v|^{-2p} \leq C^p p^p (2u^{-1/2}/\pi)^{-2p} u^{-2p} \leq (C \pi^2/4)^p p^p u^{-p} $$
This completes the proof.
\end{proof}

Now we can complete the proof of Lemma \ref{lma:kahanebound} for vertical directions:

\begin{proof}[Proof of Lemma \ref{lma:kahanebound}]
Fix $\eps >0$. Then for all $\k = u(\cos\theta,\sin\theta)\in \eps\Z^2 \setminus \{0\}$ define the random variable
$$F(\k) := \widehat{\mu}(\k) \chi_B(\k),$$
where 
$$B := \{\xi = u(\cos\theta,\sin\theta) \in \R^2 \setminus \{0\}  : \theta \in V_u\}.$$ 
Now $F(\k)$ is a well-defined finite random variable as $|\widehat{\mu}(\k)| \leq 1$ for any $\k$. From Lemma \ref{lma:kahaneprojectionsmoment} we obtain for any $\k \in \eps\Z^2 \setminus \{0\}$ and $p \in \N$ that
$$\E|F(\k)|^{2p} \leq  C^p p^p |\k|^{-p}.$$
Write $p_\k = \lfloor \log |\k|\rfloor$. Then
$$\E \sum_{\k \in \eps\Z^2 \setminus \{0\}} |\k|^{-3} \frac{|F(\k)|^{2{p_\k}}}{C^{p_\k} {p_\k}^{p_\k} |\k|^{-p_\k}} \leq \sum_{\k \in \eps\Z^2 \setminus \{0\}} |\k|^{-3} < \infty.$$
This means that the summands tend to $0$ almost surely as $|\k| \to \infty$ and so we can find a random constant $C_\omega > 0$ such that for all $\k \in \eps\Z^2 \setminus \{0\}$ we have
$$|\k|^{-3} \frac{|F(\k)|^{2{p_\k}}}{C^{p_\k} {p_\k}^{p_\k} |\k|^{-p_\k}} \leq C_\omega.$$
Thus possibly making $C_\omega$ bigger this yields
$$|F(\k)| \leq C_\omega |\k|^{-1/2}\sqrt{\log |\k|}.$$
Now this holds for each $\k \in \varepsilon\Z^2 \setminus \{0\}$ so by definition of $F(\k)$ we have, whenever $\k = u(\cos\theta,\sin\theta) \in \eps \Z^2 \setminus \{0\}$ with $\theta \in V_u$, that
$$|\widehat{\mu}(\k)| \leq C_\omega |\k|^{-1/2}\sqrt{\log |\k|}$$
as claimed.
\end{proof}

\subsection{From lattices to $\R^2$}
\label{sec:completion}

We can now complete the proof of the main theorem. For this purpose, we need the following comparison lemma used by Kahane that allows to pass from convergence on lattices for Fourier transform to the whole space:

\begin{lemma}[Kahane, Lemma 1, page 252, \cite{Ka1}]\label{lma:comparison}
Suppose $\tau$ is a measure on $\R^2$ with support in $(-1,1)^2$. Suppose $\phi,\psi : (0,\infty) \to (0,\infty)$ that are decreasing as $t \to \infty$ with the doubling properties
$$\phi(t/2) = O(\phi(t)) \quad \text{and} \quad \psi(t/2) = O(\psi(t)) \quad \text{as } t\to \infty.$$
If the Fourier transform of $\tau$ along the integer lattice $\Z^2$ satisfies
$$|\widehat{\tau}(\n)| = O(\phi(|\n|) / \psi(|\n|)), \quad \text{as } |\n| \to \infty,$$ 
then
$$|\widehat{\tau}(\xi)| = O(\phi(|\xi|) / \psi(|\xi|)), \quad \text{as } |\xi| \to \infty.$$
\end{lemma}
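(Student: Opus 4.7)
The plan is to reduce the two-dimensional statement to its one-dimensional analogue and then prove the one-dimensional version via a Schwartz cutoff and a discretisation argument. For the reduction, fix $\xi=(R,S)\in\R^2$; for each $S$, the function $R\mapsto\widehat{\tau}(R,S)$ is the Fourier transform of a compactly-supported complex measure $\tau_S$ on $\R$ (a ``Fourier slice'' of $\tau$) whose support sits in $(-1,1)$. Applying the 1D lemma with $S\in\Z$ fixed gives bounds on $(R,S)\mapsto\widehat{\tau}(R,S)$ for $R\in\R$, $S\in\Z$; applying it again in the other coordinate promotes this to bounds for $(R,S)\in\R^2$. The doubling property of $\phi/\psi$ then converts $\min(\phi(|R|)/\psi(|R|),\phi(|S|)/\psi(|S|))$ into $\phi(|\xi|)/\psi(|\xi|)$ up to a constant.

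For the 1D statement, observe that $\mathrm{supp}(\tau)$ is a compact subset of the open interval $(-1,1)$, hence is contained in $[-1+\e,1-\e]$ for some $\e>0$. Choose a Schwartz $\chi:\R\to\R$ with $\chi\equiv 1$ on $\mathrm{supp}(\tau)$ and compactly supported in $(-1,1)$. Then $\chi\tau=\tau$, so
\[\widehat{\tau}(r)=\widehat{\chi\tau}(r)=(\widehat{\chi}\ast\widehat{\tau})(r)=\int_\R\widehat{\chi}(r-s)\,\widehat{\tau}(s)\,ds,\]
and the Schwartz decay of $\widehat{\chi}$ concentrates the integrand near $s=r$. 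Discretising, one writes
\[\widehat{\tau}(r)=\sum_{n\in\Z}\widehat{\chi}(r-n)\,\widehat{\tau}(n)\;+\;E(r),\]
where $E(r)$ is the Riemann-sum error. The main sum is bounded by splitting the index set into a near zone $|n-r|\le|r|/2$ (where $|n|\asymp|r|$, so the hypothesis plus doubling gives $|\widehat{\tau}(n)|\lesssim\phi(|r|)/\psi(|r|)$, summed against the convergent $\sum_n|\widehat{\chi}(r-n)|$) and a far zone (where the rapid decay of $\widehat{\chi}$ combined with the trivial bound $|\widehat{\tau}(n)|\le\|\tau\|$ gives negligible contribution for any prescribed polynomial rate).

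The principal obstacle is the error term $E(r)$. Naive Lipschitz estimates on $\widehat{\tau}(s)$ (valid since $\tau$ is compactly supported, so $\|\nabla\widehat{\tau}\|_\infty\le 2\pi\|\tau\|$) only give $E(r)=O(1)$ per unit cell, which would dominate the main term. The remedy is to use midpoint-rule cancellation on each interval centred at $n$: the leading linear term integrates to zero by symmetry, leaving a quadratic residue controlled by $\|D^2(\widehat{\chi}(r-\cdot)\widehat{\tau})\|_{L^\infty}$ times cell volume. Since $\widehat{\chi}$ is Schwartz with rapidly decaying derivatives, and $\widehat{\tau}$ has bounded derivatives of all orders (again from $\tau$ compactly supported), summing these quadratic residues against the Schwartz tails of $D^2\widehat{\chi}$ yields a rapidly decaying error. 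A slightly cleaner route is to rescale $\chi$ so that $\widehat{\chi}$ concentrates in a ball of small radius $\delta$ around the origin; then the main sum already uses only $n$ within $\delta$ of $r$, and controlling $E(r)$ reduces to a standard smoothness/decay estimate via integration by parts. Either way, combining the main-term estimate with the error bound, and appealing once more to the doubling of $\phi/\psi$, delivers the claimed conclusion $|\widehat{\tau}(r)|=O(\phi(|r|)/\psi(|r|))$.
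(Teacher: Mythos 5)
The paper itself gives no proof of this lemma (it is quoted from Kahane's book), so the comparison is with the classical argument, whose engine is an \emph{exact} interpolation identity rather than an approximate one: choosing $\chi\in C_c^\infty$ with $\chi\equiv 1$ on $\operatorname{supp}\tau$ and $\operatorname{supp}\chi$ inside an open fundamental cube of the lattice, one periodises $x\mapsto \chi(x)e^{-2\pi i \xi\cdot x}$ and expands in Fourier series to get exactly $\widehat{\tau}(\xi)=\sum_{\n\in\Z^2}\widehat{\chi}(\xi-\n)\,\widehat{\tau}(\n)$, after which your near/far splitting (plus the remark that doubling and monotonicity force $\phi/\psi$ to decay at most polynomially, so the far zone is harmless) finishes the proof. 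Your write-up replaces this identity by a Riemann-sum approximation, and that step is a genuine gap: the error $E(r)$ cannot be beaten by midpoint/Euler--Maclaurin estimates, because near $s=r$ the integrand $\widehat{\chi}(r-s)\widehat{\tau}(s)$ and its second derivative are both $O(1)$ uniformly in $r$ (e.g.\ $\|D^2\widehat{\tau}\|_\infty\le 4\pi^2\int|x|^2d|\tau|$), so the cells with $|n-r|=O(1)$ already contribute $O(1)$, not $O(\phi(|r|)/\psi(|r|))$. The failure is not just technical: take $\tau$ to be Lebesgue measure on $[-\tfrac12,\tfrac12]^2$, a compact subset of $(-1,1)^2$. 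Then $\widehat{\tau}\equiv 0$ on $\Z^2\setminus\{0\}$, while $|\widehat{\tau}(m+\tfrac12,0)|\sim |m|^{-1}$, so lattice decay alone implies nothing better than $|\xi|^{-1}$; in this example your main sum is rapidly decaying and $E(r)$ is genuinely of order $|r|^{-1}$. Any correct proof must use quantitatively that $\operatorname{supp}\tau$ fits inside an open unit-period cube of the lattice (which the periodisation identity encodes, and which the paper's application does satisfy after its rescaling, the stated hypothesis ``support in $(-1,1)^2$'' being too generous by a factor $2$); your cutoff never plays that role, and the suggested fix of rescaling $\chi$ so that $\widehat{\chi}$ concentrates at scale $\delta$ cannot work either, since that widens $\operatorname{supp}\chi$ to scale $1/\delta$ (and $\widehat{\chi}$ can never be compactly supported when $\chi$ is).

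The two-dimensional reduction also has a gap as described. The first application of the 1D lemma (slices $\tau_S$, $S\in\Z$) gives $|\widehat{\tau}(R,S)|\lesssim \phi(|R|)/\psi(|R|)$ for $R\in\R$, $S\in\Z$, but the second application, at fixed real $R$, needs the hypothesis $|\widehat{\tau}(R,m)|\lesssim \phi(|m|)/\psi(|m|)$ for $m\in\Z$; neither the original assumption (available only for integer $R$) nor the output of the first step (decay in $|R|$, constant in $m$) supplies this, so the claimed bound $\min\bigl(\phi(|R|)/\psi(|R|),\phi(|S|)/\psi(|S|)\bigr)$ is not justified. The clean repair is to run the sampling identity directly on $\Z^2$ (it tensorises over the coordinates), rather than invoking a 1D lemma twice as a black box; if you do iterate, you must also track that the implied constants for the slices $\tau_S$ are uniform in $S$ (total variation and support are uniform, so this is fixable, but it needs saying). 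With the exact identity in place, your near-zone estimate (hypothesis plus doubling of $\phi/\psi$ against $\sum_{\n}|\widehat{\chi}(\xi-\n)|<\infty$) and far-zone estimate (rapid decay of $\widehat{\chi}$ against the trivial bound $\|\tau\|$, using that $\phi/\psi$ decays at most polynomially) are exactly the right conclusion of the argument.
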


\begin{proof}[Proof of Theorem \ref{thm:main}]
Combining Lemmas \ref{lma:kahanebound} and \ref{lma:ourbound} we have that for any $\eps > 0$, almost surely, there exists some random constant $C_\omega > 0$ such that for any $\k = u(\cos \theta,\sin\theta) \in \eps\Z^2 \setminus \{0\}$ we have
\begin{equation}\label{eq:boundformu}
| \widehat \mu (\k) | \leq C_\omega |\k|^{-1/2} \sqrt{ \log |\k| }.
\end{equation}
Define a measure $\tau_\eps$ on $\R^2$ such that 
$$\widehat{\tau_\eps}(\xi) := \widehat{\mu}(\eps \xi), \quad \xi \in \R^2.$$
By the almost sure continuity of $W_t$, we have that there exists a random constant $M_\omega > 0$ such that the diameter of the support of $\mu$ is at most $M_\omega$ almost surely. Taking an intersection of the events that \eqref{eq:boundformu} holds for $\eps = 1/n$ over all $n \in \N$ allows us to find a random $\eps = \eps_\omega > 0$ such that $\mu$ is supported on a set of diameter strictly less than $1/\eps$ and \eqref{eq:boundformu} holds almost surely with this $\eps$. This guarantees that the measure $\tau_\eps$ is supported on $(-1,1)^2$ and so applying Lemma \ref{lma:comparison} with the measure $\tau = \tau_\eps$ and the maps $\phi(t) := \sqrt{\log t}$ and $\psi(t) := t^{1/2}$ gives the claim.
\end{proof}

\section*{Acknowledgements}

We thank Tuomas Orponen for useful discussions during the preparation of this manuscript. We are also grateful to an anonymous referee for comments and suggestions which improved the focus of the paper.  Finally, we thank The Hebrew University of Jerusalem and  The University of Manchester for hosting us for research visits during the writing of this paper.


\begin{thebibliography}{00}

\bibitem{adler}
R. J. Adler. 
Hausdorff dimension and Gaussian fields,
\textit{Ann. of Probab.}, \textbf{5}, (1977), 145--151.



\bibitem{beffara}
V. Beffara.
The dimension of the SLE curves, \textit{Ann. of Probab.}, \textbf{36}, (2008), 1421--1452.

\bibitem{bluhm}
C. Bluhm. 
On a theorem of Kaufman: Cantor-type construction of linear fractal Salem sets, \textit{Ark. Mat.}, \textbf{36},  (1998), 307--316.

\bibitem{BDG}
D. L. Burkholder, B. J. Davis and R. F. Gundy.
Integral inequalities for convex functions of operators on martingales
\emph{Berkeley Symp. on Math. Statist. and Prob.}
Proc. Sixth Berkeley Symp. on Math. Statist. and Prob., Vol. 2 (Univ. of Calif. Press, 1972), 223--240.

\bibitem{chan}
V. Chan, I. {\L}aba and M. Pramanik.
Finite configurations in sparse sets, \textit{J. Anal. Math.},  {\bf 128}, (2016), 289--335. 

\bibitem{DEL}
H. Davenport, P. Erd\"os and W. LeVeque. 
On Weyl’s criterion for uniform distribution, \textit{Michigan Math. J.}, \textbf{10}, (1963), 311--314.

\bibitem{EPS}
F. Ekstr\"om, T. Persson and  J. Schmeling.
On the Fourier dimension and a modification, \textit{J. Fractal Geom.}, {\bf  2}, (2015), 309–337. 

\bibitem{FM} 
W. Fouch\'{e} and S. Mukeru. 
On the Fourier structure of the zero set of fractional Brownian motion, \textit{Statistics \& Probability Letters},\textbf{83}, (2013), 459--466



\bibitem{prequel}
J. M. Fraser, T. Orponen and T. Sahlsten.
On Fourier analytic properties of graphs,
\emph{Int. Math. Res. Not. (IMRN)}, (2014), 2730--2745.

\bibitem{HochmanShmerkin2015}
M. Hochman and P. Shmerkin.
Equidistribution from fractal measures,
\textit{Invent. Math.}, \textbf{202}, (2015),  427--479.



\bibitem{JordanSahlsten2015}
T. Jordan and T. Sahlsten.
Fourier transforms of Gibbs measures for the Gauss map,
\textit{Math. Ann.}, { \bf 364}, (2016), 983--1023.






\bibitem{kahane1966a}
J.-P. Kahane.
Images browniennes des ensembles parfaits, \textit{C. R. Acad. Sci. Paris}, \textbf{263}, (1966),  613--615.

\bibitem{kahane1966b}
J.-P. Kahane.
Images d’ensembles parfaits par des s\'eries de Fourier gaussiennes, \textit{C. R. Acad. Sci. Paris}, \textbf{263}, (1966),  678--681.

\bibitem{kahane1983}
J.-P. Kahane. Ensembles alatoires et dimensions, \textit{In Recent Progress in Fourier Analysis (El Escorial, 1983)} 65–121. North-Holland, Amsterdam.

\bibitem{Ka1}
J.-P. Kahane. \emph{Some Random series of functions}, 2nd edition (Cambridge University Press,1985).

\bibitem{kahane}
J.-P. Kahane. Fractals and random measures, \textit{Bull. Sci. Math.}, {\bf 117}, (1993), 153--159.

\bibitem{KS}
I. Karatzas and S. Shreve.
\textit{Brownian Motion and Stochastic Calculus}, Springer, 2nd Edition, 1996

\bibitem{kaufman1}
R. Kaufman. 
Continued fractions and Fourier transforms, \textit{Mathematika}, \textbf{27}, (1980), 262--267.

\bibitem{kaufman2}
R. Kaufman. 
On the theorem of Jarn\'ik and Besicovitch, \textit{Acta Arith.}, \textbf{39}, (1981),  265--267.


\bibitem{korner}
T. K\"orner.
Hausdorff and Fourier dimension, \textit{Studia Math.}, \textbf{206}, (2011), 37--50.

\bibitem{laba}
I. {\L}aba and M. Pramanik.
Arithmetic progressions in sets of fractional dimension, \textit{GAFA}, \textbf{19}, (2009), 429--456.


\bibitem{mattila}
P. Mattila.
\textit{Geometry of sets and measures in Euclidean spaces}, Cambridge Studies in Advanced Mathematics No.~44, Cambridge University Press, 1995.

\bibitem{mattilaFourier}
P. Mattila.
\textit{Fourier analysis and Hausdorff dimension}, Cambridge Studies in Advanced Mathematics, to be published on July 2015, Cambridge University Press, 2015.


\bibitem{mckean}
H. McKean.
Hausdorff-Besicovitch dimension of Brownian motion paths, \textit{Duke Math J.}, \textbf{22}, (1955), 229--234.

\bibitem{mishura}
Y. Mishura. \textit{Stochastic Calculus for Fractional Brownian Motion and Related Processes}, Lecture Notes in Mathematics, volume 1929 (Springer, Berlin), 2008.

\bibitem{peressousi}
Y. Peres and P. Sousi.
Dimension of Fractional Brownian motion with variable drift, \textit{Prob. Theory Rel. Fields}, {\bf 165}, (2016), 771--794.

\bibitem{perkins}
E. Perkins.
The exact Hausdorff measure of the level sets of Brownian motion, \textit{Z. Wahrscheinlichkeitstheorie verw. Gebiete}, \textbf{58}, (1981), 373--388.

\bibitem{peskir}
G. Peskir.
On the exponential Orlicz norm of stopped Brownian motion, \textit{Studia Mathematica}, {\bf 117}, (1996), 253–273.

\bibitem{QR}
M. Queff\'elec and  O. Ramar\'e.
Analyse de Fourier des fractions continues a quotients restreints, \textit{l'Enseignement Mathematique}, {\bf 49}, (2003), 335--356.

\bibitem{salem}
R. Salem.
On singular monotonic functions whose spectrum has a given Hausdorff dimension, \textit{Ark. Mat.}, \textbf{1}, (1951), 353--365.

\bibitem{Shm}
P. Shmerkin.
Salem sets with no arithmetic progressions, \emph{Int. Math. Res. Not. (IMRN)}, (2017), 1929--1941.

\bibitem{xiao}
N.-R. Shieh and Y. Xiao.
Images of Gaussian random fields: Salem sets and interior points, \emph{Studia Math.}, {\bf 176}, (2006), 37--60.

\bibitem{taylor}
S. J. Taylor. The Hausdorff $\alpha$-dimensional measure of Brownian paths in $n$-space, \emph{Math. Proc. Cambridge Philos. Soc.}, \textbf{49}, (1953), 31--39.

\bibitem{taylor2}
S. J. Taylor. The $\alpha$-dimensional measure of the graph and set of zeros of a Brownian path, \emph{Math. Proc. Cambridge Philos. Soc.}, \textbf{51}, (1955), 265--274.

\bibitem{young}
L. C. Young. An inequality of the H\"older type, connected with Stieltjes integration, \textit{Acta Math.},{\bf  67}, (1936), 251--282.

\bibitem{zahle}
M. Z\"ahle. Integration with respect to fractal functions and stochastic calculus. I., \textit{Prob. Theory Rel. Fields}, { \bf 111}, (1998), 333--374.

\end{thebibliography}
\end{document}